\documentclass[12pt]{amsart}
\usepackage{geometry}                
\geometry{a4paper}                   
\usepackage{graphicx}
\DeclareGraphicsRule{.tif}{png}{.png}{`convert #1 `dirname #1`/`basename #1 .tif`.png}
\usepackage{tikz}

\usepackage{hyperref}

\usepackage{amssymb}
\usepackage[T1]{fontenc}
\usepackage[bitstream-charter]{mathdesign}

\newtheorem{thm}{Theorem}[section]
\newtheorem{lem}[thm]{Lemma}
\newtheorem{prop}[thm]{Proposition}
\newtheorem{defn}[thm]{Definition}
\newtheorem{cor}[thm]{Corollary}

\newtheorem{remark}[thm]{Remark}
\newtheorem{def-prop}[thm]{Definition-Proposition}

\newcommand{\FA}{\mathbb{A}}
\newcommand{\FF}{\mathbb{F}} 
\newcommand{\FN}{\mathbb{N}}
\newcommand{\FZ}{\mathbb{Z}}  
\newcommand{\FC}{\mathbb{C}}  
\newcommand{\FR}{\mathbb{R}}  
\newcommand{\FQ}{\mathbb{Q}}  

\newcommand{\sL}{\mathcal{L}}

\DeclareMathOperator{\Td}{Todd}
\DeclareMathOperator{\GL}{GL}
\DeclareMathOperator{\SL}{SL}
\DeclareMathOperator{\sign}{sign}

\DeclareMathOperator{\tr}{Tr}
\DeclareMathOperator{\Vol}{Vol}
\DeclareMathOperator{\bbe}{\mathbf{e}}


\title{Equidistribution of generalized Dedekind sums and exponential sums}
\author{Byungheup Jun}
\author{Jungyun Lee}

\email{byungheup@gmail.com} \email{lee9311@kias.re.kr}
\address{School of Mathematics,  Korea  Institute for Advanced Study\\
Hoegiro 87, Dongdaemun-gu, Seoul 130-722, Korea}

\date{Jul. 2. 2013}
\begin{document}

\begin{abstract}
For the generalized Dedekind sums $s_{ij}(p,q)$ defined 
in association with the $x^{i}y^{j}$-coefficient of the Todd power series of the lattice cone in $\FR^2$ generated by 
$(1,0)$ and $(p,q)$, we associate an exponential sum.
We obtain this exponential sum using the cocycle property of the Todd series of 2d cones and the
nonsingular cone decomposition along with the continued fraction of $q/p$.
Its Weil  bound is given  for the modulus 
$q$ applying 
the purity theorem of the cohomology of
the related $\bar{\FQ}_\ell$-sheaf due to  Denef and Loeser.
The Weil type bound of Denef and Loeser 
fulfills the Weyl's equidistribution criterion for  $R(i,j)q^{i+j-2} s_{ij}(p,q)$.
As a special case, we recover the equidistribution
result of the classical Dedekind sums multiplied by $12$ not using the modular weight of the Dedekind's $\eta(\tau)$.
\end{abstract}
\maketitle

\tableofcontents

\section{Introduction}

Classical Dedekind sums $s(p,q)$ are defined for relatively prime integers $p,q$ by
$$
s(p,q)= \sum_{k=1}^q \left(\left(\frac{k}{p}\right)\right)\left(\left(\frac{kp}{q}\right)\right)
$$
where $\left(\left(x\right)\right)$ denotes the value of the 1st periodic Bernoulli function at $x$:
$$
\left(\left(x\right)\right)=
\bar{B}_1(x):= \begin{cases}x-[x]  -\frac12 & \text{for $x\not\in \FZ$} \\ 0 & \text{for $x\in \FZ$} .
\end{cases}
$$
This appears important in describing the change of the Dedekind eta function 
$$
\eta(\tau) = e^{2\pi i \tau /24 }\prod_{n=1}^\infty (1- e^{2\pi i n \tau}), \quad
\text{$\tau\in \frak{h}$},
$$ 
under modular transformations.
$\eta(\tau)$ is a 24th root of the modular discriminant 
$$
\Delta(\tau) = (12 \pi )^{12} \eta^{24}(\tau)
$$ up to some constant.

Due to the modularity after 24th power, 
under modular transformation $\tau \mapsto  A\tau$, for 
$A=\begin{pmatrix} a& b\\ c& d \end{pmatrix} \in \SL_2(\FZ)$, 
its logarithm satisfies the following formula due to Dedekind:
\begin{equation}
\log \eta\left( \frac{a \tau + b}{c\tau + d} \right) = \log \eta(\tau) + 
\frac14 \log\left( - (c\tau + d)^2 \right) 
+ \pi i \phi(A)
\end{equation}
where $\phi(-)$ is the Rademacher's $\phi$-function defined as
a function of $\SL_2(\FZ)$
\begin{equation}\label{phi-function}
\phi(A) :=
\begin{cases}
\frac{a+d}{12c} - \sign c \cdot s(a,c) & \text{for $c\ne 0$} \\
\frac{b}{12d} & \text{for $c=0$}
\end{cases}
\end{equation}
valued in $\frac1{12}\FZ$.
Rademacher's $\phi$-function is valued in $\frac1{12}\FZ$
and many interesting properties of Dedekind sums arise in study of 
$\phi$(Rademacher's $\phi$-function appeared in \cite{RG} is 12 times of \eqref{phi-function} thus valued in $\FZ$).

Beside above modular transformation property, Dedekind sums and their generalization appear in some expressions
of the special values of some $L$- or zeta functions at nonpositive integers. Probably the most famous is the theorem
of Meyer on class number formula in \cite{Me}.  
Siegel and Shintani obtained an expression of special values at nonpositive integers(cf. \cite{Siegel}, \cite{Shintani}).
In particular, Shintani's expression lies in the same line of thought.  One has expression of zeta values using some generalization
of Dedekind sums(\cite{Shintani}, \cite{Pom}). Using this, one can have painless proof of the rationality of the special values at nonpositive integers.  

We are interested  in the distribution of the Dedekind sums and their generalization. 
In \cite{J-L1}, we observed that the random distribution of Dedekind sums is closely related to the integrality of the special values of partial zeta functions. Originally,
Rademacher and Grosswald raised a question on the density of the values of 
Dedekind sums in p.28 of  \cite{RG}. 
There is a long list of publications on this direction. 
Hickerson proved that the points $\left(p/q, s(p,q)\right)$ are dense in $\FR^2$(\cite{Hi}).
Much later, big progress was made independently by Myerson and Vardi.
Vardi showed that for any positive real number $r$, the set
$\left\{\left<rs(p,q)\right>| 0<p<q, (p,q)=1\right\}$ is equidistributied on the interval $[0,1)$
by relating  Kloosterman sums to Dedekind sums(\cite{Vardi}).
In \cite{Myerson}, Myerson applied similar method to show that for a given nonzero real $r$
the graph of the function $p/q\mapsto rs(p,q)$ (modulo 1)
is equidistributed in the unit square.

Let us recall briefly the idea of Vardi and Myerson. 
As $\phi$ is valued in $\frac{1}{12}\FZ$, for the case $r\in 12\FZ$, one can relate the Kloosterman sums to the Dedekind sums  multiplied by $12$(See Thm.\ref{Rademacher} of this article). 
There is a well-known bound of Kloosterman sums for varying modulus due to Weil(\cite{Weil}),
which is crucial step in showing the Weyl's equidistribution criterion for the 
fractional part of Dedekind sums. 
For $r\not\in 12\FZ$, a generalization of Kloosterman sums due to Selberg are associated
to the multiplier system arising from Dedekind's $\eta$-function(\cite{Selberg}).
The strong equidistribution is beyond our interest and our dicussion focuses more on the occurrence of the factor $\frac1{12}$
and its generalization.

This will be reviewed more precisely in Sec.\ref{equid_classical} of this article for our own purpose.

In this article, we note that  many interesting properties of Dedekind sums are consequence of cocycle
condition.
As a cochain over $({\rm P})\SL_2(\FZ)$, the coboundary of the Rademacher's $\phi$-fucntion 
is identified with either the area  or 
the signature $2$-cocycle of torus fibration over a pair of pants(cf. \cite{at1}, \cite{hir}, \cite{K-M}, \cite{Meyer}).
Since these $2$-cocycles have simple geometric
interpretations, one obtains identities involving Dedekind sums
(cf. \cite{asai}, \cite{at1}, \cite{hir}, \cite{K-M}, \cite{Sczech2}).
It is a folklore that all known properties are consequences of the cocycle condition of this sort(e.g. \cite{St}, \cite{Solomon}, 
\cite{Sczech2}).
To name one of them,
one obtains the celebrated reciprocity law for the Dedekind sums by swapping
the two arguments. Also from the (finite length) continued fraction of a rational number  
one can describe it using the terms of the continued fraction(cf. \cite{barkan}, \cite{Hi}, \cite{Knu}). 
Of course, a brute-force computation using the explicit expression recovers the reciprocity law
again, but the explicit form itself is already the consequence of the cocycle condition. 
It is worth to note that Fukuhara identified Dedekind symbols in terms of the reciprocity function(\cite{Fukuhara1}, \cite{Fukuhara2}). From the point of view of cocycle property, after small corrections Dedekind symbols are part of 
1-cocyle and their reciprocity is the coboundary operation followed by specialization at a cusp. The cohomological
understanding is given by Manin through \cite{Manin}, \cite{Manin2} and \cite{Manin-Marcolli} for more general scope
using modular symbols.

The goal of this article is to relate an exponential sum
to a version of generalization of Dedekind sums appearing in 
a paper of Apostol(\cite{apostol})
using the cocycle condition. 
In this setting, the Kloosterman sum appears related to the classical Dedekind sums. 

For $i, j \ge1$, we consider the following 
generalization of Dedekind sums: 
$$
s_{ij}(p,q) : = \sum_{k=0}^{q-1} \bar{B}_i\left(\frac{k}{q}\right) \bar{B}_j\left(\frac{pk}{q}\right) .
$$ 
where $\bar{B}_i(x)$ denotes the periodic Bernoulli function. 
Classical Dedekind sums occur for the case $i=j=1$.
These appeared first in {\it loc.cit.} and Carlitz wrote some papers on their properties(\cite{ca1}, \cite{ca2}).
These vanish for $i+j$ odd after similar reasoning for the vanishing of  Bernoulli numbers of odd degree(Cor.\ref{odd_vanishing}).

There are many ways of writing the Dedekind sums. 
In this paper, so as to treat the generalized ones as well as the classical, we recover
the Dedekind sums as the coefficients of (the germ of) a certain analytic function two variables at 0 in $\FC^2$ associated
to a 2-dimensional lattice cone in $\FR^2$. 
Similar construction was made by Solomon(\cite{Solomon}) on a different basis. 
Garoufalidis and Pommersheim in \cite{Pom} took the same generating function as ours in defining
the generalized Dedekind sums but differ by some power of $q$. 
This point will be clarified later in Sec.\ref{General_Dedekind}.

A lattice cone can be identified with an affine linear map $\sigma: [0,1] \to \FR^2$
such that $\sigma(0),\sigma(1)$ are two linearly independent primitive lattice vectors.
Seen as a singular chain of $\FR^2-0$, we have obvious notion of the boundary operation. 
By 1-cocycle, we mean a functional
$S$ defined on 2-dimensional cones, which vanishes on the boundary of a (degenerate) 3-dimensional cone. This is equivalent to say that 
$$
S(\sigma_1) + S(\sigma_2) = S(\sigma)
$$
for 
$
\sigma_1+\sigma_2 = \sigma
$ 
where the addition of cones is defined as their concatenation.

For a pair of relatively prime positive integers $p, q$(Suppose $ p< q$ for convenience), 
one associates a two variable power series denoted by $\Td_{pq}(x,y)$ defined by Brion-Vergne(\cite{brion}) to write the Euler-Maclaurin summation formular for higher dimensional lattice polytopes(For precise definition, we refer the reader to \SS \ref{Todd_series}). 
$\Td_{pq}(x,y)$ is the Todd series of the cone $\sigma((1,0),(p,q))$, whose coefficient $\frac{t_{ij}(p,q)}{i!j!}$ of $x^{i} y^{j}$ 
 is closely related to $s_{ij}(p,q)$.

In particular, for  nonsingular cones, it is
\begin{equation}\label{nonsingular_todd}
\Td(x,y) = \frac{x}{1-e^{-x}} \frac{y}{1-e^{-y}} = \sum_{i,j} \frac{B_i B_j}{i!j!} x^i y^j
\end{equation}

Todd series, after certain normalization, makes a 1-cocycle as functional over chains of 2-dimensional lattice cones, 
which we call \textit{Todd cocycle} in this article.
Using the cocycle condition, we obtain an explicit formula of Todd series of a cone w.r.t. 
the cone decomposition attached to the continued fraction. Since the cones appearing
in continued fraction are nonsingular, the Todd series is decomposed into the Todd series
of each nonsingular cones.  
As seen in \eqref{nonsingular_todd}, we finally obtain an expression of generalized Dedekind sums 
involving only finite number of Bernoulli numbers. 
Then multiplied by the denominator which appear to be $R(i,j)q^{N-2}$ which would be $12$ for $i=j=1$, 
after the following theorem
we associate certain exponential sums, which generalizes the Kloosterman sums. 
\begin{thm}[main theorem]\label{general}
For $i+j=N\geq 2$ even, we have 
$$
R(i,j)q^{N-2}s_{ij}(p,q)  - \frac{p'^{i}\alpha_{N}r_N\begin{pmatrix}N-1 \\i\end{pmatrix} +p^{j}\alpha_{N}r_N\begin{pmatrix}N-1 \\j\end{pmatrix}}{q}
$$  is always integer 
where $p'$ is an integer such that $p'p \equiv 1 \pmod{q}$ and 
$$
R(i,j):=\begin{pmatrix}
        N \\
       i
\end{pmatrix}  \beta_{N} r_N,
$$
for an integer $r_N$(for precise description, see Thm.\ref{Todd_N_part})
and $\beta_N$ being the denominator of $B_N$ the $N$-th Bernoulli number.
\end{thm}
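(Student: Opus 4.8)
The plan is to read off the result from the explicit continued-fraction expansion of the degree-$N$ part of the Todd series furnished by Thm.\ref{Todd_N_part}. Expanding $q/p$ as a finite continued fraction produces a subdivision of the cone $\sigma((1,0),(p,q))$ into nonsingular cones, and the Todd cocycle property writes $\Td_{pq}(x,y)$ as the corresponding sum of the nonsingular Todd series \eqref{nonsingular_todd}, each expressed in the unimodular coordinates attached to a pair of consecutive convergents. Extracting the homogeneous component of degree $N=i+j$ and passing from the Todd coefficients $t_{ij}(p,q)$ to the Dedekind sums $s_{ij}(p,q)$, one obtains $R(i,j)q^{N-2}s_{ij}(p,q)$ as an explicit finite $\FZ$-linear combination of products of Bernoulli numbers weighted by the convergents. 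The whole task then reduces to locating the unique source of non-integrality in this combination.

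First I would clear the Bernoulli denominators. By the von Staudt--Clausen theorem the denominator of $B_N$ is exactly $\beta_N$, so the factor $\beta_N r_N$ built into $R(i,j)$ replaces $B_N$ by the integer $\alpha_N=\beta_N B_N$, while the lower-degree factors $B_m$ with $m<N$ occur only in products that the accompanying powers of $q$ coming from $q^{N-2}$ and from the convergents render integral. The contributions of the interior cones of the subdivision involve only convergent denominators away from the extreme vertices, and these I would show contribute integers after multiplication by $R(i,j)q^{N-2}$. It is convenient to organize this step around the reciprocity $s_{ij}(p,q)=s_{ji}(p',q)$, obtained by the substitution $k\mapsto p'k$ in the defining sum together with the periodicity of $\bar{B}_i$, which exchanges the two generators of the cone and predicts that the residual fractional part splits into two symmetric pieces, one attached to each generator.

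The surviving fractional contributions come from the two extreme cones, those adjacent to $(1,0)$ and to $(p,q)$. Near the generator $(p,q)$ the restriction of the product $\tfrac{\ell}{1-e^{-\ell}}\tfrac{\ell'}{1-e^{-\ell'}}$ to the boundary edge collapses to a single-variable series whose degree-$N$ coefficient is $B_N$; because the edge is a codimension-one face, extracting the $x^{i}y^{j}$ coefficient after the unimodular substitution lowers the upper index of the binomial from $N$ to $N-1$, giving $\binom{N-1}{i}$ rather than $\binom{N}{i}$, and the second coordinate $q$ of the generator supplies the denominator $q$. The unimodular matrix carrying the edge to a standard basis vector is exactly the one encoding $p'p\equiv 1\pmod{q}$, so the numerator becomes $p'^{i}$; this produces the term $\tfrac{p'^{i}\alpha_N r_N\binom{N-1}{i}}{q}$, and the symmetric computation at $(1,0)$ produces $\tfrac{p^{j}\alpha_N r_N\binom{N-1}{j}}{q}$. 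Subtracting these two terms removes all non-integrality, which is the assertion of the theorem.

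The main obstacle is the bookkeeping in the last two paragraphs: one must prove that the interior fractional parts genuinely cancel rather than merely expecting them to, and that the two boundary faces contribute precisely $p'^{i}\binom{N-1}{i}$ and $p^{j}\binom{N-1}{j}$ with the single denominator $q$. The delicate point is tracking the successive unimodular coordinate changes along the continued fraction and verifying that the only $q$ appearing in a denominator enters through the last change of basis via $p'p\equiv 1\pmod{q}$. The cocycle relation is what guarantees that the intermediate contributions assemble into integers, and making this cancellation fully explicit is the heart of the argument.
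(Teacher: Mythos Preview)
Your plan is the paper's plan: deduce Thm.~\ref{general} by reading off the $x^iy^j$-coefficient in Thm.~\ref{Todd_N_part}, which in turn comes from the continued-fraction cone decomposition and the Todd cocycle. You also correctly predict the shape of the two fractional pieces and their origin in the $B_N$ contribution at the two ends of the decomposition.

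Where your sketch is vaguer than the paper is in the mechanism that isolates the fractional part. It is not really a matter of ``interior cones contribute integers while boundary cones do not,'' nor of restricting to an edge. In the paper's expansion
\[
\Td_{pq}(x,y)=qxy\sum_{k=-1}^{n-1}(-1)^{k+1}\sum_{i,j\ge 0}(-1)^{i+j}\tfrac{B_iB_j}{i!j!}M_k^{j-1}M_{k+1}^{i-1},
\]
every term with $i,j\ge 1$ already carries the explicit prefactor $q$, so after clearing Bernoulli denominators with $\beta_N r_N$ these lie in $q\FZ[x,y]$ for \emph{all} cones, interior or not. The only terms not covered are the $B_0B_N$ contributions $\tfrac{B_N}{N!}\cdot\tfrac{M_k^N+M_{k+1}^N}{M_kM_{k+1}}$, which still have linear forms in the denominator. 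The paper disposes of these by the single telescoping identity coming from the continued-fraction recursion $M_{k-1}-M_{k+1}=a_{k+1}M_k$; summing over $k$ collapses everything to a $q$-multiple plus the two boundary remnants $M_0^{N-1}x$ and $M_{n-1}^{N-1}y$ (equation~\eqref{qxy}). Since $M_0=x+py$ and $(-1)^{n-1}q_{n-1}\equiv p'\pmod q$, extracting the $x^iy^j$-coefficient gives exactly $\alpha_Nr_N\binom{N-1}{j}p^j$ and $\alpha_Nr_N\binom{N-1}{i}(p')^i$ modulo $q$. So the ``bookkeeping'' you flag as the main obstacle is resolved not by delicate cancellation among interior cones but by one three-term recursion; once you see that, the proof is a two-line coefficient extraction from Thm.~\ref{Todd_N_part} together with Thm.~\ref{coefficient}.
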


Thm.\ref{general} 
relates a particular case of the generalized Kloosterman sum $K_{ij}(k,\ell,q)$ for
$k=\alpha_{N}r_N {N-1 \choose i}$ and $\ell=\alpha_{N} r_N {N-1 \choose j}$
defined below:
\begin{defn}[Generalized Kloosterman sum]\label{G_KS}
For a positive integer $q$,
$$
K_{ij}(k,\ell,q):=
\sum_{\substack{0<p<q\\pp'\equiv 1\pmod{q}}}\bbe\left(\frac{k(p')^{i}+\ell p^{j}}{q}\right),$$
where $\bbe(x):=\exp(2\pi i x)$.
\end{defn}

These generalized Kloosterman sums have Weil type bound. It is a consequence of 
a result on the weight and dimension of the cohomology of an $\ell$-adic sheaf 
due  to Denef-Loeser in \cite{De-Loe}.
Using the Weil bound for the generalized Kloosterman sums, one can show that
the Weyl's equidistribution criterion for the generalized Dedekind sums holds.

Thus after our main theorem, the equidistribution property of the (fractional part) of the generalized Dedekind sums multiplied by
$R(i,j)q^{N}$ similar to 
that on classical Dedekind sums in \cite{Vardi}.
\begin{thm}
\label{main}
 For even $N=i+j$,
the set 
\begin{equation*}
\left\{
\left<R(i,j)q^{N-2}s_{i,j}(p,q)\right>\big|0<p<q,(p,q)=1
\right\}
\end{equation*}
is equidistributed in the interval $[0,1)$, where $\left<x\right>$ is the  fractional part of $x$ in $[0,1)$.
\end{thm}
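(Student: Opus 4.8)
The plan is to verify Weyl's equidistribution criterion. Listing the admissible pairs $(p,q)$ with $0<p<q$ and $(p,q)=1$ in order of increasing $q$, and within a fixed $q$ in order of increasing $p$, it suffices to show that for every nonzero integer $h$ the normalized exponential sums
$$
\frac{1}{\sum_{q\le Q}\varphi(q)}\sum_{q\le Q}\ \sum_{\substack{0<p<q\\(p,q)=1}}\bbe\!\left(h\,R(i,j)q^{N-2}s_{ij}(p,q)\right)
$$
tend to $0$ as $Q\to\infty$; recall that $\bbe(h\,\cdot)$ depends only on the fractional part, so this is exactly what governs the distribution of $\left<R(i,j)q^{N-2}s_{ij}(p,q)\right>$. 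Since $\sum_{q\le Q}\varphi(q)\sim \tfrac{3}{\pi^2}Q^2$, the normalizing factor has exact order $Q^2$. Because an incomplete block of $p$'s for a single modulus contributes at most $\varphi(q)\le q\le Q$ terms of modulus one, such a tail is $O(Q)$ and hence negligible after normalization; thus it is enough to bound the complete double sum over $q\le Q$.

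Next I would invoke Theorem \ref{general} to rewrite the inner sum over $p$. Fixing $q$ and setting $k_0:=\alpha_N r_N\binom{N-1}{i}$ and $\ell_0:=\alpha_N r_N\binom{N-1}{j}$, Theorem \ref{general} asserts that
$$
R(i,j)q^{N-2}s_{ij}(p,q)-\frac{k_0\,(p')^{i}+\ell_0\,p^{j}}{q}\in\FZ ,
$$
where $p'p\equiv 1\pmod q$. Consequently $\bbe\!\left(h\,R(i,j)q^{N-2}s_{ij}(p,q)\right)=\bbe\!\left(\frac{h k_0(p')^{i}+h\ell_0 p^{j}}{q}\right)$ for every integer $h$. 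Summing over the residues $p$ coprime to $q$ and comparing with Definition \ref{G_KS}, the inner sum is precisely the generalized Kloosterman sum $K_{ij}(h k_0,h\ell_0,q)$.

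It then remains to feed in the Weil type bound for $K_{ij}$ furnished by the weight and dimension estimates of Denef--Loeser in \cite{De-Loe}: for $h\ne 0$ the parameters $h k_0,h\ell_0$ are nonzero, and one obtains an estimate of the shape $|K_{ij}(h k_0,h\ell_0,q)|\ll_{N,h} q^{1/2+\varepsilon}$, the implied constant and the factor $q^{\varepsilon}$ absorbing the divisor losses incurred in passing from prime moduli to composite $q$. Summing over $q\le Q$ gives $\ll_{N,h} Q^{3/2+\varepsilon}$, and dividing by the normalizing factor of order $Q^{2}$ yields $\ll_{N,h} Q^{-1/2+\varepsilon}\to 0$. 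This verifies Weyl's criterion and proves Theorem \ref{main}.

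The step I expect to be the genuine obstacle is not the analytic assembly above but the uniform Weil bound itself. For prime $q=p$ the inner sum is a one-variable exponential sum over $\mathbb{F}_p^{\times}$ attached to the Laurent polynomial $f(x)=k_0 x^{-i}+\ell_0 x^{j}$ (with $x$ in the role of $p$ and $x^{-1}$ in the role of $p'$); its square-root cancellation follows from the purity and cohomological dimension results of Denef--Loeser once one checks that $f$ is nondegenerate with respect to its Newton polygon, which amounts to excluding the finitely many primes dividing $h k_0\ell_0$. For prime-power and composite moduli one must then descend by the Chinese remainder theorem to prime powers, control the prime-power sums by an elementary stationary-phase argument (the Denef--Loeser input being a finite-field statement), and reassemble multiplicatively so that the accumulated factors stay within the divisor bound $d(q)\ll q^{\varepsilon}$. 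Only after this uniformity in $q$ is secured does the equidistribution follow formally as sketched, recovering in the case $i=j=1$ the classical result of \cite{Vardi}.
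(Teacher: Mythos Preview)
Your proposal is correct and follows essentially the same route as the paper: reduce to Weyl's criterion, use Theorem~\ref{general} to convert the inner sum over $p$ into the generalized Kloosterman sum $K_{ij}(hk_0,h\ell_0,q)$, then invoke the Weil type bound $|K_{ij}(k,\ell,q)|\ll q^{1/2+\varepsilon}$ and sum over $q\le Q$. Your outline of how that bound is obtained---Denef--Loeser purity for prime moduli with nondegenerate Newton polygon, elementary stationary phase for prime powers, Chinese remainder multiplicativity, and absorbing the accumulated constants via $\omega(q)\ll\log\log q$ into $q^\varepsilon$---is exactly the argument carried out in Section~6 of the paper (Lemmas~\ref{nondegenerate}--\ref{pro2} and Proposition~\ref{bound}).
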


Said roughly, the equidistribution of the classical Dedekind sums multiplied by $12$ in \textit{loc.cit.} is a consequence of the modularity of $\eta(\tau)$.
Contrary to the classical case, 
as we don't have such a function which play the role of $\log\eta(\tau)$ for generalized Dedekind sums, 
the theorem is not entirely clear from the definition. 
Thus we argue that this is a consequence the cocycle property of the Todd series similar to many other properties 
of the classical Dedekind sums. As mentioned already, for classical Dedekind sums we have  $12=R(1,1)q^0$.


This paper is composed as follows: In Sec.2, we review a part of Vardi's result to relate the Kloosterman sum to 
the classical Dedekind sums. In Sec.3, we define the Todd series of a lattice cone and describe the cocycle 
condition. In Sec.4, the generalized Dedekind sums are identified with the coefficients of the Todd series.
In Sec.5 the generalized Kloosterman sums appear in relation to the generalized Dedekind sums
and the proof of Thm.\ref{general} is given.
Sec.6 is devoted to the Weil bound for generalized Kloosterman sums and we finish the proof of the main 
theorem.

\begin{center}
\bf Notations and convention
\end{center}

\begin{itemize}
\item For a real number $x$, $\left< x\right>= x - [x]$ is the fractional part taken in $[0,1)$.
\item For a function $f$ in $x$, $ f << x^{a+\epsilon}(\forall \epsilon>0)$ means
that $|f| = o(x^{a+\epsilon})$ for every positive $\epsilon$.
\item $\bbe(x)$ denotes $\exp(2\pi i x)$.
\item The $k$-th \textit{Bernoulli number} $B_i$ is defined by the generating function
$$
\frac{z}{e^z-1} = \sum_{k=0}^\infty \frac{B_k}{k!} z^k.
$$
\item The $k$-th \textit{Bernoulli polynomial} $B_k(x)$ is the degree $k$ polynomial  defined by
$$
\frac{ze^{xz}}{e^z-1} = \sum_{k=0}^\infty \frac{B_k(x)}{k!} z^k.
$$
The \textit{$k$-th periodic Bernoulli function} $\bar{B}_k(x)$ is a $\FZ$-periodic function on $\FR$  
defined by assigning the values for $x\in [0,1)$ as follows:
$$
\bar{B}_k (x) = 
\begin{cases}
B_k (\left<x\right>) & \text{for $x\not\in \FZ$}\\
B_k & \text{for $k\ge 2$ and $x\in\FZ$,} \\
0 & \text{for $k=1$, $x\in \FZ$}
\end{cases}
$$
\end{itemize}

\textbf{Acknowledgement.} 
We thank  Hi-joon Chae and  Haesang Sun for careful reading of earlier version of this article. 
We thank Prof. Y. I. Manin for showing interest and for introducing works of S. Fukuhara. 
We are also grateful to Vincent Maillot for encouragement, many valuable comments and discussion.

\section{Equidistribution of classical Dedekind sums}\label{equid_classical}

In this section, we sketch the proof of the equidistribution of the fractional parts of  classical Dedekind sums
multiplied by 12 using the Weil's bound for Kloosterman sums. 
The proof appears at the beginning of Vardi(\cite{Vardi}). This is not only a special case of the main result of \textit{loc.cit.}
but also a case not covered by the main technic of multiplier system attached to $\log \eta(\tau)$.

As discussed, this step comes from the modularity of $\eta(\tau)$ in relation to 
$\Delta(\tau)$.

We remind a criterion for a sequence to be equidistributed in $[0,1)$ due to H. Weyl and will show that
this is the case.
Later, we will be using the  Weyl's equidistribution criterion for the generalized Dedekind sums.

\subsection{Rademacher's theorem}
We begin with a reinterpretation of Rademacher's $\phi$-function. 
\begin{thm}[Rademacher]\label{Rademacher}
For a relatively prime pair of integers $(p,q)$, 
$12 s(p,q) -\frac{p'+p}{q}$ is always integer whenever $p'p\equiv 1 \pmod {q}$.
\end{thm}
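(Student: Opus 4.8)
The plan is to read the integrality of $12\,s(p,q) - \frac{p'+p}{q}$ directly off the transformation law of $\eta$ quoted in the introduction, the key input being that the Rademacher $\phi$-function takes values in $\frac1{12}\FZ$, i.e. that $12\phi(A)\in\FZ$. First I would establish this integrality. Multiplying the Dedekind transformation formula for $\log\eta$ by $24$ turns the left-hand side into $\log\eta^{24}$, and comparing with the clean weight-$12$ transformation $\Delta(A\tau) = (c\tau+d)^{12}\Delta(\tau)$ of the modular discriminant $\Delta = (12\pi)^{12}\eta^{24}$, the two occurrences of $\log\eta^{24}$ cancel and the two copies of $12\log(c\tau+d)$ cancel as well, leaving a congruence of the form $6\log(-1) + 24\pi i\,\phi(A)\in 2\pi i\,\FZ$. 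Taking the principal branch $\log(-1)=\pi i$, this reduces to $3 + 12\phi(A)\in\FZ$, hence $12\phi(A)\in\FZ$ for every $A=\begin{pmatrix}a&b\\c&d\end{pmatrix}\in\SL_2(\FZ)$ with $c\neq 0$. The one delicate point is to track the branches of the logarithms consistently; this is exactly where the modularity of $\eta$ (through the weight of $\Delta$) enters, as emphasized in the introduction.

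With $12\phi(A)\in\FZ$ in hand, the second step is to specialize. Given coprime $p,q$ with $q>0$, I would choose $A\in\SL_2(\FZ)$ with $a=p$ and $c=q$; such a matrix exists precisely because $\gcd(p,q)=1$, and then its lower-right entry $d$ satisfies $pd\equiv 1\pmod q$, so $d\equiv p'\pmod q$ for any inverse $p'$ of $p$ modulo $q$. Since $c=q>0$ we have $\sign c = 1$, and the defining formula \eqref{phi-function} gives
\begin{equation*}
12\phi(A) = \frac{p+d}{q} - 12\, s(p,q).
\end{equation*}

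The final step is purely arithmetic bookkeeping. Writing $d = p' + mq$ with $m\in\FZ$, we get $\frac{p+d}{q} = \frac{p+p'}{q} + m$, so $\frac{p+d}{q}$ and $\frac{p+p'}{q}$ agree modulo $\FZ$. Substituting into the displayed identity and invoking $12\phi(A)\in\FZ$ then yields $12\, s(p,q) - \frac{p'+p}{q}\in\FZ$, as claimed; the statement is independent of the choice of inverse $p'$, since two inverses differ by a multiple of $q$. All of the substantive content is concentrated in the first step, namely the value of $12\phi$ in $\FZ$: once that modular input is granted, the specialization and the final reduction are elementary manipulations with the $\SL_2(\FZ)$-entries.
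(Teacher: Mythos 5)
Your argument is correct and is essentially the route the paper itself takes: the paper justifies Theorem~\ref{Rademacher} by remarking that it merely rephrases the fact that Rademacher's $\phi$-function is valued in $\frac{1}{12}\FZ$, which follows from the modularity of $\eta$ through $\Delta=(12\pi)^{12}\eta^{24}$ --- exactly the computation you carry out, followed by the same specialization $a=p$, $c=q>0$, $d\equiv p'\pmod q$. (The paper later recovers this integrality a second time as the $i=j=1$ case of Theorem~\ref{general} via the Todd cocycle, but that is not the proof it offers for Theorem~\ref{Rademacher} itself.)
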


The above Rademacher's theorem is nothing but rephrasing the fact that the values of Rademacher's $\phi$-function
are taken in $\frac{1}{12}\FZ$.
At a glance, this is not along the line we follow to show the result of same type for generalized Dedekind sums. 
But later we will see that the integrality will turn out to be a special case of the cocycle property. 
Later in Sec.5, we will see that the appearance of $12$ in the Rademacher's theorem is due to the denominators are 
product of  $B_1$ and $B_2$ the 1st and  the 2nd Bernoulli numbers.

\subsection{Kloosterman sums}
Let $q$ be a positive integer and $k, \ell$ be a pair of integers relatively prime to $q$.
The \textit{Kloosterman sum} for $k,\ell$ of modulus $q$ is denoted by
$K(k,\ell,q)$ and defined as
\begin{equation*}
K(k,\ell,q): = \sum_{x \in (\FZ/q\FZ)*} \bbe \left(\frac{k}q x + \frac{\ell}q x^{-1}\right)
 = \sum_{\substack{0\le x \le q-1\\ (x,q)=1}} \bbe  \left(\frac{k}q x + \frac{\ell}q x^{-1}\right)
\end{equation*}

There is a well-known upper bound  of Kloosterman sums due to Weil: 
\begin{equation}\label{kloo}
K(k,\ell,q)
<< q^{\frac{1}{2}+\epsilon}\quad(\forall \epsilon >0).
\end{equation}

\subsection{Kloosterman sum and Dedekind sum}\label{KDS}
For two positive integers $m$ and $q$ fixed, if we sum $\bbe\left(12 m s\left(p,q\right)\right)$ 
over $1 \le p \le q$ such that $(p,q)=1$, 
we obtain the Kloosterman sum from Thm.\ref{Rademacher}:
\begin{equation}\label{dedekind}
K(m,m,q) = 
\sum_{\substack{0<p<q\\(p,q)=1}} \bbe\left(12m s(p,q)\right)
= \sum_{\substack{0<p<q\\p'p \equiv 1 \pmod{q}}}\bbe\left(\frac{p'm+pm}{q}\right).
\end{equation}

Using Weil's bound (\ref{kloo}),   we conclude 
that
\begin{equation}\label{estimation}
\sum_{0<q<x}
\sum_{\substack{0<p<q\\(p,q)=1}} 
\bbe\left(12 m s(p,q)\right)<< x^{\frac{3}{2}+\epsilon},\,\,\,\,(\forall \epsilon>0).
\end{equation}

Then the above bound implies that
the set 
$$
\left\{\left<12 s (p, q)\right>\Big| 0<p<q, (q,p)=1
\right\}
$$
fulfills a famous criterion for a sequence in $[0,1)$ to be equidistributed due to H. Weyl(\cite{Weyl}) 
as stated below.

\subsection{Weyl's equidistribution criterion}
A sequence $\{s_i \in [0,1) \}_{i\in \FN}$ is equidistributed 
iff for every  $k\in \FZ\backslash\{0 \}$, 
$$
\lim_{N\to \infty} \frac1N \sum_{n=1}^N \bbe( k s_n)= 0.
$$ 

For a positive integer $m$ and $x$, let $E(m,x)$ be the value
$$
E(m,x) = \frac{1}{\# \left\{(p,q)|\gcd(p,q)=1, p<q\le x \right\} } \sum_{0<q<x}\sum_{\substack{0<p<q\\(p,q)=1}} \bbe\left(12 m s\left(p,q\right)\right).
$$ 
Taking the limit $x\to \infty$, from the Weil's bound, we have $E(m,x)\to 0$. 
Therefore $\left<12 s(p,q)\right>$ is equidistributed in $[0,1)$.


For the rest of the paper, 
we consider the possibility of the same sort of  equidistribution of the 
generalized Dedekind sums $s_{ij}(p,q)$.

\section{Todd Series of  a lattice cone}
Now we recall the definition of Todd series as defined by Brion-Vergne in \cite{brion}, for the case of 2-dimensional cones.
For 1 dimensional case, this equals the generating function of the Bernoulli numbers up to 
sign of the variable. The Todd series yields a differential operator of infinite order used in the formulation
of the Euler-Maclaurin formula for higher dimensional polytopes. 
These generating functions are  closely related to Shintani functions considered by Solomon in \cite{Solomon}
and have similar cocycle property. Later we will see that the Todd series recovers the generalized Dedekind sums
as Solomon's Shintani functions. But we warn the reader that these are not the same, 
\textit{a priori} their
cocycle conditions are on the dual cones to each other. This is briefly mentioned by Garoufalidis-Pommersheim in \cite{Pom}.
We will clarify this relation in another paper in sequel \cite{J-L2}. 
Maybe we could conclude the same result in the framework of Shintani functions. But through our works in relation(\cite{J-L1},
\cite{J-L2}), 
we find it more comfortable to manipulate Todd series rather than Solomon's Shintani functions. 

\subsection{Lattice cones}
Let $M$ be the standard lattice $\FZ^2$ in $\FR^2$. We consider cones defined in $M$.
By lattice cone, we mean the convex hull of two linearly independent rays of rational slopes. 
It is always possible to choose unique primitive lattice vectors generating the rays. 
Let $\sigma = \sigma(v_1,v_2)$ be a lattice cone and $v_1,v_2$ be primitive lattice generators of the 
rays bounding $\sigma$.
Be aware that we take the orientation(ie. the order of the rays) into consideration so that $\sigma(v_1,v_2) \neq \sigma(v_2,v_1)$. 
$\sigma$ is sometimes identified with an integer coefficient matrix 
$A_\sigma$ whose columns are the lattice vectors $v_1,v_2$ in $\FZ^2$.
$M_\sigma$ denotes the sublattice of $M$ generated by $v_1,v_2$. 
$\Gamma_\sigma = M/M_\sigma$ is isomorphic to a cyclic group of order $\left|\det(A_\sigma)\right|$.

For $g \in M$ representing $\gamma\in \Gamma_\sigma$,  we have rational numbers $a_{\sigma,i}(g)$, $i=1,2$ such that
$$
g = a_{\sigma,1}(\gamma)v_1 + a_{\sigma,2}(\gamma) v_2 .
$$

$a_{i,\sigma}$, being integral on $M_\sigma$, yields a character  $\chi_{\sigma,i}$ on $\Gamma_\sigma$
as  
$$
\chi_{\sigma,i} (\gamma) := \bbe\left(a_{\sigma, i}(g)\right),\quad \text{for $i=1,2$}.
$$

\subsection{Todd Series}\label{Todd_series}
The Todd power series of $\sigma$ is defined as:
\begin{equation}\label{todd_definition}
\Td_\sigma (x_1,x_2): = \sum_{\gamma\in \Gamma_\sigma} \frac{x_1}{1-\chi_{\sigma,1}(\gamma) e^{-x_1}}
\frac{x_2}{1-\chi_{\sigma,2}(\gamma) e^{-x_2}}
\end{equation}

The coefficients of $\Td_\sigma(x_1,x_2)$ is rational though the expression \eqref{todd_definition} 
contains some roots of $1$. This is easy to see from the Galois invariance of the expression.

The Todd series is invariant of  the $\SL_2(\FZ)$ equivalent class of  cones by the following proposition.

\begin{prop}
Let $\sigma = \sigma(v_1,v_2)$ be a be a lattice cone 
and $A$ be a matrix in $\SL_2(\FZ)$.
Then we have
$$
\Td_\sigma (x_1,x_2) = \Td_{A\sigma} (x_1,x_2).
$$
\end{prop}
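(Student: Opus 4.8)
The plan is to exploit the fact that any $A\in\SL_2(\FZ)$ is an automorphism of the lattice $M=\FZ^2$, so that passing from $\sigma$ to $A\sigma$ merely relabels the data entering the definition \eqref{todd_definition} without altering any individual summand. Concretely, since $A\sigma=\sigma(Av_1,Av_2)$, the sublattice $M_{A\sigma}$ generated by $Av_1,Av_2$ is exactly $A(M_\sigma)$; because $A$ restricts to a bijection of $M$ onto itself, it descends to a group isomorphism
$$
\bar{A}\colon \Gamma_\sigma=M/M_\sigma \to M/M_{A\sigma}=\Gamma_{A\sigma},\qquad [g]\mapsto[Ag].
$$

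Next I would check that $\bar{A}$ is compatible with the coordinate functions, and hence with the characters. If $g\in M$ represents $\gamma$ with $g=a_{\sigma,1}(\gamma)v_1+a_{\sigma,2}(\gamma)v_2$, then applying the linear map $A$ gives $Ag=a_{\sigma,1}(\gamma)(Av_1)+a_{\sigma,2}(\gamma)(Av_2)$, so the $A\sigma$-coordinates of the representative $Ag$ coincide with the $\sigma$-coordinates of $g$:
$$
a_{A\sigma,i}(\bar{A}\gamma)=a_{\sigma,i}(\gamma),\qquad i=1,2.
$$
Applying $\bbe(-)$ yields $\chi_{A\sigma,i}(\bar{A}\gamma)=\chi_{\sigma,i}(\gamma)$ for $i=1,2$. (One should note in passing that the coordinate functions are well defined modulo $\FZ$ on $\Gamma_\sigma$, since shifting $g$ by an element of $M_\sigma$ changes them by integers, so the characters are genuinely functions on $\Gamma_\sigma$ and $\bar{A}$ is well defined.)

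Finally I would reindex the sum defining $\Td_{A\sigma}$ along the bijection $\gamma\mapsto\bar{A}\gamma$: each summand
$$
\frac{x_1}{1-\chi_{A\sigma,1}(\bar{A}\gamma)e^{-x_1}}\frac{x_2}{1-\chi_{A\sigma,2}(\bar{A}\gamma)e^{-x_2}}
$$
equals the corresponding summand of $\Td_\sigma$ indexed by $\gamma$, giving the claimed identity term by term. I do not expect a genuine obstacle here; the only point requiring a moment's care is the transport of the coordinate functions, which is immediate from the linearity of $A$ and the fact that $A$ carries a basis of $M_\sigma$ to a basis of $M_{A\sigma}$.
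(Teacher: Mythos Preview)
Your proposal is correct and follows essentially the same argument as the paper: both use that $A\in\SL_2(\FZ)$ induces an isomorphism $\Gamma_\sigma\to\Gamma_{A\sigma}$ under which the coordinate functions (hence the characters $\chi_{\sigma,i}$) are preserved, so the sums defining $\Td_\sigma$ and $\Td_{A\sigma}$ match term by term. Your write-up is slightly more detailed in checking well-definedness of the characters on the quotient, but the substance is identical.
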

\begin{proof}
$A$ gives an isomorphism
$$
A: M/M_\sigma \to M/M_{A\sigma},\quad\gamma\mapsto A\gamma,\
\text{for $\gamma\in \Gamma_\sigma$}.
$$
For $\gamma = a_{\sigma,1}(\gamma) v_1 + a_{\sigma,2} (\gamma) v_2$,
$$
A\gamma = a_{\sigma,1} (\gamma) A v_1 + a_{\sigma,2}(\gamma) A v_2.
$$
Therefore
\begin{equation*}
\begin{split}
\Td_{A\sigma}(x_1,x_2) &= 
	\sum_{A\gamma\in \Gamma_{A\sigma}} 
		\frac{x_1}{1-\chi_{A\sigma,1}(A\gamma)e^{-x_1}}
		\frac{x_2}{1-\chi_{A\sigma,2}(A\gamma)e^{-x_2}} \\
	& = \sum_{A\gamma\in \Gamma_{A\sigma}} 
		\frac{x_1}{1-\chi_{\sigma,1}(\gamma)e^{-x_1}}
		\frac{x_2}{1-\chi_{\sigma,2}(\gamma)e^{-x_2}} \\
	&=\Td_\sigma(x_1,x_2)
\end{split}
\end{equation*}
\end{proof}

Let $p, q>0$ be two relatively prime nonnegative integers. Then $(1,0)$ and $(p,q)$ are
primitive lattice vectors and linearly independent. 
Let $\sigma_{pq}$ denote the cone generated by
$(1,0)$ and $(p,q)$.   
Notice that any lattice cone is equivalent to $\sigma_{pq}$ after basis change.
We shall write $\Td_{pq}$ instead of $\Td_{\sigma_{pq}}$ for simplicity.

\subsection{Normalized Todd series and cocycle property}

From now on, we will be dealing with only lattice cones in the 1st quadrant. 
This is not necessary in defining the cocycle property for cones but otherwise we need to
extend the category of cones due to the unnecessary occurrence of the Maslov index(cf. \cite{arnold}, \cite{J-L2}, \cite{Solomon}). 
For example, in \cite{Solomon}, this ambiguity appeared by the name  `\textit{formal Cauchy theorem}'.
Consequently, for full generality, one has to take the value of the cocycle modulo $\FZ$ or to take a 
central extension of $\SL_2(\FZ)$(cf.\cite{at1}, \cite{K-M}).
Here considering  only the cones in the 1st quadrant, we can avoid this difficulty.
A drawback is that we don't have the cocycles defined over $\GL_2(\FQ)$ but over singular chains in $\FR^2-0$.
Nevertheless, this won't harm any result of this article. 

\begin{defn}
Let $\sigma$ be a lattice cone.
Then the  normalized Todd series $S_\sigma(x_1,x_2)$ of $\sigma$ is defined as
$$
S_{\sigma} (x_1,x_2) = \frac{1}{\det(A_\sigma) x_1 x_2 } \Td_\sigma(x_1,x_2).
$$
Similarly, $S_{\sigma_{pq}}$ is abbreviated to $S_{pq}$ as in unnormalized case.
\end{defn}

Because $\Td_\sigma(x_1,x_2)$ is holomorphic at $0\in \FC^2$, we may well take
$\Td_\sigma(x_1,x_2)\in \FC\{\{ x_1, x_2\}\}$, where $\FC\{\{ x_1,x_2\}\}$ is the
ring of  power series convergent for some neightborhood of $0$. 
Note that the coefficients of $\Td_\sigma(x_1,x_2)$ lie in $\FQ$.
Thus we can write
$$
\Td_\sigma(x_1,x_2) \in \FQ\{\{ x_1,x_2\}\}:= \FC\{\{ x_1, x_2\}\}\cap\FQ[[x_1,x_2]].
$$

With the notations above,  since $S_\sigma(x_1,x_2)$ has 
simple pole along the two axes: $x_1 =0$ and $x_2=0$,
$$S_\sigma(x_1,x_2) \in \frac1{x_1x_2}\FQ\{\{x_1,x_2\}\}.$$

Note that swapping  two rays of the cone interchanges not only the variables but also the 
sign in $S_\sigma$. Thus the orientation of a cone is reflected in $S_\sigma$.
In this case, the same cone with the opposite orientation will be denoted by $-\sigma$. 

\begin{lem}
For a lattice cone $\sigma$ in the 1st quadrant, 
\begin{align*}
S_{-\sigma} (x_1,x_2) &= - S_{\sigma}(x_2,x_1) \\
\Td_{-\sigma} (x_1,x_2) & = \Td_\sigma(x_2,x_1)
\end{align*}

\end{lem}

Let $v_1,v_2, v_3$ be pairwise linearly independent primitive lattice vectors in the 1st quadrant.
Let $\sigma_{ij}$ be the lattice cone generated by $v_i, v_j$. 
Then we write $\sigma_{ik}=\sigma_{ij}+\sigma_{jk}$.
Actually a cone $\sigma$ generated by lattice vectors  $v_1,v_2$ can be seen as a simplex
$$
\sigma: [0,1] \to \FR^2
$$
for an affine linear map $\sigma$ with $\sigma(0)=v_1, \sigma(1) = v_2$.
Abusing the notation, if $v_1=v_2$, such a degenerate cone behaves as a unit when
added to $\sigma(v_1,v)$ for any $v$. Thus we consider the groupoid of 2 dimensional
cones. The addition is defined up to boundary of (degenerate) 3 dimensional cones.

\begin{defn}
A \textit{1-cocycle} over cones is a functional over cones valued in an abelian group $M$
$$
\phi: \sigma \mapsto \phi(\sigma)\in M
$$
satisfying $\phi(\sigma_1 + \sigma_2) = \phi(\sigma_1) + \phi(\sigma_2)$.
\end{defn}

In terms of groupoids, the 1-cocycle is a map preserving the operation of the groupoid of cones.
Our notion of 1-cocycles agree with the modular pseudo-measures defined by Manin and Marcolli(
\cite{Manin-Marcolli}), if it were defined on $\Bbb{P}^1_+(\Bbb{R}):= \FR^2-0/\FR^*_+$.
Furthermore, the action of $SL_2(\FZ)$ on $\FR^2$ makes the 1-cocycle a modular pseudo-measure.

To avoid unnecessary complication, we will consider only the cones lying in the right half plane 
$\FR^2_{x_1\ge0}$.
From now on, we denote by `$\text{2-d Cones}$' the set of all 2-dimensional lattice 
cones in the right half plane.
Todd cocycle is a cocycle on $\text{2-d Cones}$ defined as below.
Let $L$ be the reduced equation of the line orthogonal to  a lattice vector $(p,q)$ for $p>0$. Written explicitly, 
$$
L =  q x_1 - p x_2.
$$
By $\frac{1}{\prod_L L} \FQ\{\{x_1,x_2\}\}$, we mean 
$$
\sum_L \frac{1}{L}\FQ\{\{x_1,x_2\}\}
$$
where $L$ runs for all primitive lattice vectors in the right half plane.
\begin{def-prop}[Todd cocycle]
The  Todd cocycle is a map
$$
\Phi : \text{2-d Cones} \to 
\frac{1}{\prod_L L} \FQ\{\{x_1,x_2\}\}
$$ 
given by
$$
\Phi(\sigma) = S_{\sigma} (A_\sigma^{-1}(x_1,x_2)) \in \FQ\{\{x_1,x_2\}\} (L_1^{-1},L_2^{-1}),
$$
where $L_i$ are the equation of lines orthogonal to $v_1,v_2$.
This is a 1-cocycle over 2-dimensional cones in $\FR^2$. 
\end{def-prop}

\begin{proof}
For the proof, we refer the reader to Thm.3 of \cite{Pom2}.
\end{proof}

\section{Generalized Dedekind sum as coefficients of  Todd series of a cone}
\label{General_Dedekind}
Now we are going to identify the generalized Dedekind sum $s_{ij}(p,q)$ with the $x_1^i x_2^j$-coefficient of
$\Td_{pq}$.
We begin with the definition of $s_{ij}(p,q)$. 
There are variations of the same sum in essential for instance \cite{Pom}, \cite{Solomon}.
Our convention follows that appeared in p.71 of  \cite{RG}.

Recall that we consider for positive integers $i,j$,   the generalized Dedekind sum as follows:
$$
s_{ij}(p,q):=\sum_{k=0}^{q-1}\bar{B}_{i}(\frac{k}{q})\bar{B}_{j}(\frac{pk}{q}).
$$

For $i=j=1$, we have the classical Dedekind sum: $s_{11}(p,q) = s(p,q)$.

 Let $\frac{t_{ij}(p,q)}{i!j!}$ be the coefficient of 
$x_1^ix_2^j$ in  $\Td_{pq}(x_1,x_2)$. 
Thus 
$$\Td_{pq}(x_1,x_2) = \sum_{i,j \ge 0} \frac{t_{ij}(p,q)}{i! j!} x_1^i x_2^j.$$ 

\begin{thm}[Compare with \cite{Pom}]\label{coefficient}
We have 
$$
t_{i j }(p,q)=-(-q)^{i+j-1}\left(s_{ij}(p,q)+\delta(i,j)B_i B_j\right),
$$
where 
$\delta(i,j)=\begin{cases}1,& i=1 \text{ or }j=1,
\\0, &\text{ otherwise}\end{cases}$ and $B_i$ is the $i$-th
Bernoulli number.
\end{thm}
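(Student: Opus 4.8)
The plan is to compute $t_{ij}(p,q)$ directly from the defining expression \eqref{todd_definition} for $\sigma=\sigma_{pq}$ and to recognize the result, via a finite Fourier (orthogonality) argument, as the sum defining $s_{ij}(p,q)$ up to a boundary correction. First I would make $\Gamma_{\sigma_{pq}}$ and its characters explicit. Since $v_1=(1,0)$, $v_2=(p,q)$, one has $M_\sigma=\FZ\times q\FZ$, so $\Gamma_\sigma\cong\FZ/q\FZ$ with representatives $g_k=(0,k)$, $0\le k\le q-1$. Solving $(0,k)=a_1v_1+a_2v_2$ gives $a_{\sigma,1}(g_k)=-pk/q$ and $a_{\sigma,2}(g_k)=k/q$, whence $\chi_{\sigma,1}(\gamma_k)=\bbe(-pk/q)$ and $\chi_{\sigma,2}(\gamma_k)=\bbe(k/q)$. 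Writing $\omega=\bbe(1/q)$, expression \eqref{todd_definition} becomes the explicit finite sum
$$
\Td_{pq}(x_1,x_2)=\sum_{k=0}^{q-1}\frac{x_1}{1-\omega^{-pk}e^{-x_1}}\cdot\frac{x_2}{1-\omega^{k}e^{-x_2}},
$$
so that $t_{ij}(p,q)=\sum_{k=0}^{q-1}c_i(\omega^{-pk})\,c_j(\omega^{k})$, where $c_n(\zeta):=n!\,[x^n]\,\frac{x}{1-\zeta e^{-x}}$.

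The key computational lemma I would establish is a closed form for $c_n$ at $q$-th roots of unity. Summing the geometric expansions $\frac{x}{1-\omega^k e^{-x}}=x\sum_{m\ge0}\omega^{km}e^{-mx}$ against $\omega^{-kr}$ and using $\frac1q\sum_k\omega^{k(m-r)}=[m\equiv r\pmod q]$ yields the finite Fourier identity
$$
\frac1q\sum_{k=0}^{q-1}\omega^{-kr}\frac{x}{1-\omega^{k}e^{-x}}=\frac{x\,e^{-rx}}{1-e^{-qx}}=\sum_{n\ge0}\frac{B_n\!\left(1-\tfrac rq\right)q^{\,n-1}}{n!}x^n,
$$
where the last equality is the Bernoulli polynomial generating function after the rescaling $y=qx$. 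Fourier inversion then gives $c_n(\omega^m)=q^{\,n-1}\sum_{r=0}^{q-1}\omega^{mr}B_n\!\left(1-\tfrac rq\right)$.

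Substituting this into $t_{ij}=\sum_k c_i(\omega^{-pk})c_j(\omega^k)$, interchanging the order of summation, and collapsing the inner sum via $\sum_{k}\omega^{k(s-pr)}=q\,[s\equiv pr\pmod q]$ leaves a single sum
$$
t_{ij}(p,q)=q^{\,i+j-1}\sum_{r=0}^{q-1}B_i\!\left(1-\tfrac rq\right)B_j\!\left(1-\left<\tfrac{pr}{q}\right>\right).
$$
Applying the reflection formula $B_n(1-u)=(-1)^nB_n(u)$ extracts the sign $(-1)^{i+j}$, i.e. exactly the prefactor $-(-q)^{i+j-1}$, and reduces the claim to the identity $\sum_{r=0}^{q-1}B_i(\tfrac rq)B_j(\left<\tfrac{pr}{q}\right>)=s_{ij}(p,q)+\delta(i,j)B_iB_j$.

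It remains to compare this with $s_{ij}(p,q)=\sum_r\bar{B}_i(\tfrac rq)\bar{B}_j(\left<\tfrac{pr}{q}\right>)$ term by term. Because $\gcd(p,q)=1$, for every $r$ with $1\le r\le q-1$ both $r/q$ and $\left<pr/q\right>$ lie in the open interval $(0,1)$, where $B_n$ and $\bar{B}_n$ coincide; hence the two sums differ only in the term $r=0$, which contributes $B_i(0)B_j(0)-\bar{B}_i(0)\bar{B}_j(0)=B_iB_j-\bar{B}_i(0)\bar{B}_j(0)$. Since $\bar{B}_1(0)=0$ while $\bar{B}_n(0)=B_n$ for $n\ge2$, this difference equals $B_iB_j$ precisely when $i=1$ or $j=1$ and vanishes otherwise, i.e. it is $\delta(i,j)B_iB_j$, which would complete the proof. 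I expect this last bookkeeping at $r=0$ to be the main obstacle: the entire $\delta(i,j)$ correction arises from the single point where $\bar{B}_1$ is redefined to vanish at integers, and one must crucially invoke $\gcd(p,q)=1$ to rule out any other integral argument $\left<pr/q\right>$; getting the signs and the factor $q^{\,n-1}$ right in the finite Fourier identity, together with the reflection formula, is the other place demanding care.
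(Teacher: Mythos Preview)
Your argument is correct. Both your proof and the paper's rest on the same two ingredients---orthogonality of characters on $\FZ/q\FZ$ and the Bernoulli-polynomial generating function $\frac{ze^{tz}}{e^z-1}$---and both arrive at
\[
t_{ij}(p,q)=q^{-1}(-q)^{i+j}\sum_{k=0}^{q-1}B_i\!\left(\tfrac{k}{q}\right)B_j\!\left(\left<\tfrac{pk}{q}\right>\right),
\]
followed by the same $k=0$ bookkeeping for the $\delta(i,j)$ term. The organizational difference is that the paper first interprets the orthogonality step geometrically, rewriting $\Td_{pq}$ as a lattice-point sum over the dual cone $\check\sigma$ and then decomposing $\check\sigma\cap M^*$ into translates of a fundamental parallelogram $P(u_1,u_2)$; the representatives in $P(u_1,u_2)$ are exactly the points $\tfrac{k}{q}u_1+\left<\tfrac{pk}{q}\right>u_2$, which produces the Bernoulli-polynomial sum directly without a reflection step. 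You instead factor $t_{ij}$ as $\sum_k c_i(\omega^{-pk})c_j(\omega^k)$, compute each $c_n$ by a one-variable discrete Fourier inversion, and then collapse by orthogonality; this is slightly more streamlined computationally and avoids naming the dual cone, at the cost of introducing the reflection $B_n(1-u)=(-1)^nB_n(u)$ to recover the sign. The paper's route has the advantage of exposing the geometric picture (useful elsewhere when cone decompositions are invoked), while yours is a clean self-contained calculation; they are equivalent.
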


\begin{proof}
Let $\sigma=\sigma_{pq}$ and $v_1=(1,0)$, $v_2=(p,q)$ 
be the primitive nonzero lattice generators of $\sigma$.
Let $v^*_j$ be the dual vector of $v_i$ for $i=1,2$(ie. $\left<v_i, v_j^*\right> = \delta_{ij}$). 
Thus 
$$
v_1^* = \left(1, -\frac{p}q\right)\quad\text{and}\quad v^*_2  = \left(0,\frac1q \right).
$$

The dual cone $\check\sigma$ of $\sigma$ is generated by $v_1^*$ and $v_2^*$.

Then 
we have
\begin{equation}\label{t1}
\begin{split}
\Td_{pq}(x_1,x_2)&=\sum_{g\in \Gamma_{\sigma_{pq}}}\frac{x_1}{1-e^{2\pi i \left<v_1^*,g\right>}e^{-x_1}}\frac{x_2}{1-e^{2\pi i \left<v_2^*,g\right>}e^{-x_2}}\\
&=x_1x_2\sum_{n_1, n_2\geq0}\sum_{g\in \Gamma_{\sigma_{pq}}}e^{2\pi i \left<n_1v_1^*+n_2v_2^*,g\right>}e^{-n_1x_1-n_2x_2}.
\end{split}
\end{equation}

Since 
$$\sum_{g\in \Gamma_{\sigma_{p,q}}}e^{2\pi i \left<n_1v_1^*+n_2v_2^*,g\right>}=
\begin{cases} |\Gamma_{\sigma_{p,q}}|=q  ,& n_1v_1^*+n_2v_2^*\in M^*,
\\0, &\text{ otherwise}\end{cases}
$$
one can write $\Td_{pq}$ as summation over lattice points in $\check\sigma$:
\begin{equation}\label{t2}
\begin{split}
\Td_{pq}(x_1,x_2)&= 
qx_1x_2
\sum_{\substack{n_1v_1^*+n_2v_2^*\in M^*\\ n_1,n_2 \geq 0}}e^{-n_1x_1-n_2x_2} \\
&=qx_1x_2\sum_{m\in \FZ^2 \cap \check\sigma}e^{-\left<m,v_1\right>x_1-\left<m,v_2\right>x_2}.
\end{split}
\end{equation}

Notice that in general $v_i^*$ are not lattice vectors
but the primitive lattice generators  of $\check\sigma$ 
are 
$$u_1=(q,-p),\,\,u_2=(0,1).$$ 
Let $P(u_1,u_2)$ the following half open parallelogram: 
$$
P(u_1,u_2)=\left\{x_1u_1+x_2u_2 |0\leq x_i<1\right\}.
$$
Then 
$$
\check\sigma\cap M^*=\left\{z+n_1u_1+n_2u_2| z\in P(u_1,u_2)\cap M^*, n_i\geq 0\right\}.
$$
Thus we have
\begin{equation}
\begin{split}
&\Td_{pq}(x_1,x_2)=q x_1 x_2 \sum_{z\in P(u_1,u_2)\cap
M^*}\frac{e^{-\left<z,v_1\right>x_1-\left<z,v_2\right>x_2}}{(1-e^{-qx_1})(1-e^{-q
x_2})}\\
&=q^{-1}\sum_{i,j\geq 0} \sum_{z\in P(u_1,u_2)\cap
M^*}B_i\left(\frac{\left<z,v_1\right>}{q}\right)B_j\left(\frac{\left<z,v_2\right>}{q}\right)(-q)^{i+j} \frac{x_1^i x_2^j}{i!j!}
\end{split}
\end{equation}

The lattice points inside $P(u_1,u_2)$ are identified as follows:
$$
P(u_1,u_2)\cap M^*=\left\{\frac{k}{q}u_1+
\left<\frac{pk}{q}\right>
u_2
\Big| \,\, k=0,1,2,\cdots, q-1\right\}.
$$
Hence $t_{ij}(p,q)$ and $s_{ij}(p,q)$ are related in the desired form:
\begin{equation*}
\begin{split}
t_{ij}(p,q) &= q^{-1}(-q)^{i+j}\sum_{z\in P(u_1,u_2)\cap
M^*}B_i\left(\frac{\left<z,v_1\right>}{q}\right)B_j\left(\frac{\left<z,v_2\right>}{q}\right) \\
&=q^{-1}(-q)^{i+j}\sum_{k=0}^{q-1} B_i\left(\frac{k}{q}\right)B_j\left(\left<\frac{pk}{q}\right>\right)\\
&=q^{-1}(-q)^{i+j}\left(\sum_{k=0}^{q-1} \overline{B}_i\left(\frac{k}{q}\right)\overline{B}_j\left(\frac{pk}{q}\right)+\delta(i,j)B_i B_j\right)\\
&= -(-q)^{i+j-1} \left( s_{ij}(p,q) + \delta(i,j) B_i B_j
\right).
\end{split}
\end{equation*}
 \end{proof}


For odd $i+j$,
$s_{ij}(p,q)$ will turn out to be trivial. 
This is easy consequence of  the previous theorem.

Let us
define $L^\lambda(x)$  for a fixed complex number $\lambda\ne0$ as
$$
L^\lambda(x) := \frac{x}{2}\frac{1+\lambda e^{-x}}{1-\lambda e^{-x}}.
$$
For $\lambda = 1$, this is the even part of $\Td(x)$:
$$
\Td(x) :=\frac{x}{1-e^{-x}}=  \frac{x}{2}+ L^{\lambda=1}(x)
$$

For $\lambda\ne 1$, $L^\lambda(x)$ is not even in general, but we have
\begin{equation}
\Td^\lambda(x) := \frac{x}{1-\lambda e^{-x}} = \frac{x}{2} + L^\lambda (x)
\end{equation}
and
\begin{equation}
L^\lambda(-x) = L^{\lambda^{-1}}(x).
\end{equation}

Thus if  $\sigma$ is a lattice cone, 
$
\sum_{g\in \Gamma_\sigma} L^{\chi_i(g)}(x_i) 
$
is even for $i=1,2$. 
Therefore we have decomposition of $\Td_\sigma(x_1,x_2)$ as follows:
\begin{equation}
\Td_\sigma(x_1,x_2) = \frac{q}{4}x_1 x_2 + 
\frac{1}{2} \sum_{g\in\Gamma_\sigma} \left(x_1 L^{\chi_2 (g) }(x_2) + x_2
 L^{\chi_1 (g) }(x_1)\right) +
\frac14 \sum_{g\in\Gamma_\sigma} L^{\chi_1 (g)} (x_1) L^{\chi_2 (g) }(x_2)
\end{equation}
Notice that the odd part of $\Td_\sigma(x_1,x_2)$ is 
$$
\frac{1}{2} \sum_{g\in\Gamma_\sigma} \left(x_1 L^{\chi_2 (g) }(x_2) + x_2
 L^{\chi_1 (g) }(x_1)\right).
$$
So $t_{ij}(p,q)=0$ for $i+j$ odd and $i,j>1$.
Otherwise, for example $i=1$ and $j=2k$,
$$
t_{1,2k}(p,q) = \frac{q^{2k}}2 B_{2k} = -q^{2k}\left( s_{1,2k}(p,q) +B_1 B_{2k}\right).
$$
Since $B_1 =-\frac12$, again we have $s_{1,2k}(p,q) =0$.

Hence we obtain the following corollary:
\begin{cor}\label{odd_vanishing}
Let $p,q$ be relatively prime pair of integers. Then
for given $i,j \ge 1$ such that $i+j$ is odd, 
$$
s_{ij}(p,q) = 0.
$$
\end{cor}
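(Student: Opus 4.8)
The plan is to exploit the reflection symmetry of the periodic Bernoulli functions, in exact parallel with the classical argument that $B_n=0$ for odd $n\ge 3$. Recall that the Bernoulli polynomials satisfy $B_n(1-t)=(-1)^nB_n(t)$; since $\bar B_n$ has period $1$, this descends to the functional equation $\bar B_n(-x)=(-1)^n\bar B_n(x)$ for every real $x$. I would first record this identity for $\bar B_i$ and $\bar B_j$, checking that it is consistent with the conventions fixed in the Notations at integer arguments (there $\bar B_n$ vanishes precisely when $n$ is odd, so both sides agree).

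Next I would apply the involution $k\mapsto q-k$ to the summation index in
$$s_{ij}(p,q)=\sum_{k=0}^{q-1}\bar B_i\!\left(\tfrac{k}{q}\right)\bar B_j\!\left(\tfrac{pk}{q}\right).$$
For $k=1,\dots,q-1$ this is a genuine involution, and using $\tfrac{q-k}{q}=1-\tfrac{k}{q}$ together with $\FZ$-periodicity one gets $\bar B_i\!\left(\tfrac{q-k}{q}\right)=\bar B_i\!\left(-\tfrac{k}{q}\right)=(-1)^i\bar B_i\!\left(\tfrac{k}{q}\right)$, and likewise, since $p\in\FZ$, $\bar B_j\!\left(\tfrac{p(q-k)}{q}\right)=\bar B_j\!\left(-\tfrac{pk}{q}\right)=(-1)^j\bar B_j\!\left(\tfrac{pk}{q}\right)$. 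Reindexing therefore multiplies the summand by $(-1)^{i+j}$, yielding $s_{ij}(p,q)=(-1)^{i+j}s_{ij}(p,q)$, which for $i+j$ odd forces $s_{ij}(p,q)=0$.

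The only point requiring care, and the step I expect to be the main (minor) obstacle, is the fixed index $k=0$, which the involution sends to itself. Here I would observe that when $i+j$ is odd one of $i,j$ is odd, so the corresponding factor $\bar B_i(0)$ or $\bar B_j(0)$ vanishes ($\bar B_1(0)=0$ by convention, and $B_n=0$ for odd $n\ge 3$); hence the $k=0$ term is $0$ and can simply be discarded, after which the clean pairing over $k=1,\dots,q-1$ applies. Everything else is a routine change of variables.

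Alternatively, and in keeping with the preceding discussion of the Todd series, the same conclusion follows from Theorem \ref{coefficient} and the even/odd decomposition of $\Td_\sigma(x_1,x_2)$ under $(x_1,x_2)\mapsto(-x_1,-x_2)$. Using $L^\lambda(-x)=L^{\lambda^{-1}}(x)$ and the bijection $g\mapsto g^{-1}$ of $\Gamma_\sigma$, each $\sum_g L^{\chi_i(g)}(x_i)$ is even, so the odd part of $\Td_\sigma$ is $\tfrac12\sum_g\bigl(x_1L^{\chi_2(g)}(x_2)+x_2L^{\chi_1(g)}(x_1)\bigr)$, which carries only monomials with $i=1$ or $j=1$. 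This gives $t_{ij}(p,q)=0$ for $i+j$ odd with $i,j>1$, whence $s_{ij}(p,q)=0$ by Theorem \ref{coefficient} since $\delta(i,j)=0$; the residual cases $i=1$ (or $j=1$) are settled by computing $t_{1,2k}(p,q)=\tfrac{q^{2k}}{2}B_{2k}$ and comparing with Theorem \ref{coefficient}, where $B_1=-\tfrac12$ exactly cancels the $\delta$-correction.
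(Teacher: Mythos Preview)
Your proposal is correct. The alternative argument you sketch at the end---decomposing $\Td_\sigma(x_1,x_2)$ via the functions $L^\lambda$, identifying the odd part, and then invoking Theorem~\ref{coefficient} with the $\delta(i,j)$ correction in the boundary cases $i=1$ or $j=1$---is exactly the paper's proof; the paper presents only this route, in the paragraphs immediately preceding the corollary.

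Your primary argument, by contrast, is genuinely different and more elementary: it works directly with the definition of $s_{ij}(p,q)$ and the reflection $\bar B_n(-x)=(-1)^n\bar B_n(x)$, applying the involution $k\mapsto q-k$ on the index set. This bypasses the Todd series entirely and avoids the case split on $i,j>1$ versus $i=1$ or $j=1$ (which in the paper requires separately computing $t_{1,2k}$ and cancelling the $B_1B_{2k}$ term). What the paper's approach buys is coherence with the surrounding narrative, since the corollary is positioned as a consequence of identifying $s_{ij}$ with Todd coefficients; what your direct argument buys is a self-contained two-line proof that would stand even without Section~\ref{General_Dedekind}.
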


For the rest of this paper, we assume $i+j$ is even.

\section{ Generalized Dedekind sums and Generalized Kloosterman sums}

In this section, we would like  to evaluate  $s_{ij}(p,q)$ 
in terms of the 
the continued fraction of $q/p$ for even
$i+j=N$. 
As we saw in the previous section, 
$s_{ij}(p,q)$ vanishes if $i+j$ is odd.

Writing explicitly $s_{ij}(p,q)$, we will obtain an analogous statement to Rademacher's 
theorem(Thm.\ref{Rademacher}).
Then we will be able to relate generalized Kloosterman sums to a generalization of Dedekind sums.

The geometric counterpart of the continued fraction is the
cone decomposition. 
Accordingly, 
the (normalized) Todd series is decomposed into sum of the (normalized) Todd series
of nonsingular cones.

Let $q$ and $p$ be relatively prime positive integers  and suppose $q>p$.

We are going to associate the (positive) continued fraction of  $q/p$: 
$$
\frac{q}{p}=a_1+ \cfrac{1}{a_2 + \cdots\cfrac{1}{a_n}},$$
where $a_i\geq1$ are all integers. 
We put $(p_{-1},q_{-1})=(1,0)$,
$(p_0,q_0)=(0,1)$ and for $i\geq1$. 
Define a pair of relatively prime integers $p_i$ and $q_i$ using truncation of the continued fraction of $\frac{q}{p}$:
$$
\frac{q_i}{p_i}:=a_1+ \cfrac{1}{a_2 + \cdots\cfrac{1}{a_i}}.
$$

\begin{figure}
\begin{tikzpicture}[scale=2]
\draw [<->] (0,3) -- (0,0) -- (3,0);
\draw [fill] (1,0) circle [radius=0.5pt];
\node [below] at (1,0) {$v_{-1}$};
\draw [fill] (0,1) circle [radius=0.5pt];
\node [left] at (0,1) {$v_0$};
\draw [fill] (2,2.5) circle [radius=0.5pt];
\draw [-] (0,0) -- (2.2, 2.75);
\draw [fill] (1.5, 1.3) circle [radius=0.5pt]; \node [right] at (1.5,1.3) {$v_1 = (p_1,q_1)$};
\draw [-] (0,0) -- (2.5, 2.16);
\draw [fill] (1.2, 1.9) circle [radius=0.5pt]; \node [above left] at (1.2, 1.9) {$v_2=(p_2,q_2)$};
\draw [-] (0,0) -- (1.8,2.85);
\node [below right] at (2,2.5) {$v_n=(p,q)$};
\end{tikzpicture}
\caption{$\nu_i$}\label{figure1}
\end{figure}

As previous, 
let $\sigma:=\sigma_{p,q}$ and $v_k$ be the primitive lattice vectors in the 1st quadrant $(p_k,q_k)$ for   $-1\leq k \leq n$(See Fig.\ref{figure1}).

Then we have the following virtual cone decomposition of $\sigma$
into nonsingular cones: 
$$
\sigma:= \sigma_{pq}= \sigma(v_{-1},v_{n})=\sum_{k=-1}^{n-1} \sigma_k,
$$
where $\sigma_k := \sigma(v_k, v_{k+1})$.

After the additivity of normalized Todd series, according to the continued fraction of $q/p$, we obtain the following expression:
\begin{equation}\label{normalized_Todd}
S_{pq}(x,y)=\sum_{k=-1}^{n-1}(-1)^{k+1}F\left(A_{\sigma_k}^{-1}A_{\sigma}(x,y)^{t}\right),
\end{equation}
where
$$F(x,y)=\frac{1}{1-e^{-x}}\frac{1}{1-e^{-y}}=\frac{\Td (x,y)}{xy}.$$
One should note that $F(x,y)$ is the normalized Todd series of a nonsingular cone (up to sign).

Recall
that the 1-variable Todd series is 
$$
\Td(z) = \frac{z}{1-e^{-z}}=\sum_{i=0}^{\infty}(-1)^i\frac{B_i}{i!} z^{i}.
$$

The matrix $A_{\sigma_k}^{-1} A_\sigma$ is computed as
$$
A_{\sigma_k}^{-1} A_\sigma = (-1)^{k+1} 
\begin{pmatrix}
q_{k+1} & p q_{k+1} - q p_{k+1} \\
-q_k & - pq_{k} + q p_k
\end{pmatrix}.
$$

As $\det(A_\sigma)= q$, by multiplying $qxy$  we obtain the following expression of  Todd series of $\sigma$ from 
\eqref{normalized_Todd}:
\begin{equation}\label{S_sigma}
\begin{split}
\Td_{pq}(x,y)&=qxy\sum_{k=-1}^{n-1}(-1)^{k+1}\frac{\Td(M_k)\Td(M_{k+1})}{M_k M_{k+1}}\\
&=qxy\sum_{k=-1}^{n-1}(-1)^{k+1}\sum_{i=0}^{\infty}\sum_{j=0}^{\infty}(-1)^{i+j}\frac{B_i}{i!}\frac{B_j}{j!}M_k^{j-1}M_{k+1}^{i-1},
\end{split}
\end{equation}
where 
\begin{equation}
M_k := 
\begin{cases}
qy, &   k=-1\\
(-1)^{k}\left(q_kx+(p q_k-q p_k)y\right), & 0\le k \le n-1\\
(-1)^n qx, & k=n .
\end{cases}
\end{equation}

Denote by $\Td_{\sigma}^N$ the degree $N$  homogeneous
part of $\Td_{\sigma}$. 
Then from \eqref{S_sigma} 
$\Td_\sigma^{N}$ is given as follows:
\begin{equation}\label{ToddN}
\begin{split}
\Td_{\sigma}^{N}
=&qxy\sum_{k=-1}^{n-1}(-1)^{k+1}\sum_{i=0}^{N-2}(-1)^N\frac{B_{i+1}}{(i+1)!}\frac{B_{N-i-1}}{(N-i-1)!}M_k^{N-2-i}M_{k+1}^{i}\\
&+(-1)^Nqxy\frac{B_{N}}{N!}\sum_{k=-1}^{n-1}(-1)^{k+1}\frac{M_k^{N}+M_{k+1}^{N}}{M_kM_{k+1}}.
\end{split}
\end{equation}

Since 
for $k\ge 0$ we have
$$
M_{k-1}-M_{k+1}=a_{k+1}M_k,
$$
\begin{equation}\label{qxy}
\begin{split}
&qxy\sum_{k=-1}^{n-1}(-1)^{k+1}\frac{M_k^{N}+M_{k+1}^{N}}{M_kM_{k+1}}\\
&=qxy\left(\sum_{k=0}^{n-1}(-1)^ka_{k+1}\sum_{i=0}^{N-2}M_{k-1}^{N-2-i}M_{k+1}^i\right)+M_0^{N-1}x+M_{n-1}^{N-1}y.
\end{split}
\end{equation}

Therefore, plugging \eqref{qxy} into \eqref{ToddN},  we obtain
\begin{equation}\label{eq1}
\begin{split}
&\Td_{pq}^{N}
=qxy\sum_{k=-1}^{n-1}(-1)^{k+1}\sum_{i=0}^{N-2}(-1)^{N}\frac{B_{i+1}}{(i+1)!}\frac{B_{N-i-1}}{(N-i-1)!}M_k^{N-2-i}M_{k+1}^{i}\\
&+(-1)^Nq\frac{B_{N}}{N!}xy\left(\sum_{k=0}^{n-1}(-1)^ka_{k+1}\sum_{i=0}^{N-2}M_{k-1}^{N-i-2}M_{k+1}^i
\right)+\frac{B_{N}}{N!}M_0^{N-1}x+\frac{B_{N}}{N!}M_{n-1}^{N-1}y.
\end{split}
\end{equation}


Consequently, we obtain the following integrality involving $\Td_{pq}^{N}$.
\begin{thm}\label{Todd_N_part}
Suppose $N$ is an even positive integer.
Let $\frac{\alpha_k}{\beta_k}$ be the reduced fraction of $B_k\neq 0$ with $\beta_k>0$ 
and 
$$
r_N:=\text{L.C.M.}\left\{\text{Denominator of} \,\,\beta_{N}\begin{pmatrix}N \\i+1\end{pmatrix} B_{i+1}B_{N-i-1}\Big| \text{$i$ odd}, 0\leq i \leq {N-2}\right\}.
$$
Then  we have
\begin{equation*}
\begin{split}
&N! \beta_{N}r_N\Td_{pq}^{N}(x,y)\\
&-\alpha_{N}r_N(x+py)^{N-1}x-\alpha_{N}r_N\left((-1)^{n-1}q_{n-1}x+y\right)^{N-1}y\in q\FZ[x,y].
\end{split}
\end{equation*}
\end{thm}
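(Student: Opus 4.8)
The plan is to read the claimed congruence directly off the explicit expansion \eqref{eq1} of $\Td_{pq}^{N}$, after multiplying through by the normalizing factor $N!\beta_{N}r_{N}$. First I would record that every linear form $M_{k}$ occurring in \eqref{eq1} has integer coefficients: indeed $M_{-1}=qy$, $M_{n}=(-1)^{n}qx$, and for $0\le k\le n-1$ one has $M_{k}=(-1)^{k}\bigl(q_{k}x+(pq_{k}-qp_{k})y\bigr)$ with $q_{k},\,pq_{k}-qp_{k}\in\FZ$. Consequently every monomial $M_{k}^{a}M_{k+1}^{b}$ and $M_{k-1}^{a}M_{k+1}^{b}$ lies in $\FZ[x,y]$, so the only thing left to control is the rationality of the scalar coefficients.

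Next I would isolate the two boundary terms $\frac{B_{N}}{N!}M_{0}^{N-1}x+\frac{B_{N}}{N!}M_{n-1}^{N-1}y$ of \eqref{eq1}, which are the only terms not carrying an explicit factor of $q$. Writing $B_{N}=\alpha_{N}/\beta_{N}$, multiplication by $N!\beta_{N}r_{N}$ turns these into $\alpha_{N}r_{N}M_{0}^{N-1}x+\alpha_{N}r_{N}M_{n-1}^{N-1}y$. Using $(p_{0},q_{0})=(0,1)$ gives $M_{0}=x+py$, and the standard convergent determinant identity $q_{k}p_{k-1}-q_{k-1}p_{k}=(-1)^{k}$ (so that $pq_{n-1}-qp_{n-1}=(-1)^{n+1}$) gives $M_{n-1}=(-1)^{n-1}q_{n-1}x+y$. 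Hence these boundary contributions are exactly the two terms subtracted in the statement, and they cancel.

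It then remains to show that $N!\beta_{N}r_{N}$ times the two remaining sums of \eqref{eq1} lies in $q\FZ[x,y]$. Both of these sums already carry a factor $q$, so I only need integrality of their coefficients. For the double sum the coefficient of a monomial is, after using $N!/((i+1)!(N-i-1)!)=\binom{N}{i+1}$, equal to $\pm\,\beta_{N}r_{N}\binom{N}{i+1}B_{i+1}B_{N-i-1}$. Because $N$ is even, for odd $i$ both indices $i+1$ and $N-i-1$ are even, and these coefficients are integral precisely by the definition of $r_{N}$ as the least common multiple of their denominators; for even $i$ the factor $B_{i+1}B_{N-i-1}$ vanishes (one index is odd and $\ge 3$) except in the single case $N=2,\ i=0$, which I would check by hand ($\beta_{2}\binom{2}{1}B_{1}^{2}=3$). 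For the last sum the coefficient is $\alpha_{N}r_{N}$ times the integers $a_{k+1}$, hence integral. Assembling these observations, every surviving term is $q$ times an element of $\FZ[x,y]$, which gives the assertion.

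The main obstacle I expect is purely bookkeeping rather than conceptual: correctly pinning down the two boundary linear forms $M_{0}$ and $M_{n-1}$ through the convergent identities, and verifying that restricting the definition of $r_{N}$ to odd $i$ really suffices, i.e.\ that all even-$i$ contributions to the degree-$N$ part drop out by the vanishing of odd-index Bernoulli numbers. Once those two points are settled, the integrality modulo $q$ is immediate from the explicit form \eqref{eq1}.
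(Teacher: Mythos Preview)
Your proposal is correct and follows essentially the same route as the paper: multiply \eqref{eq1} by $N!\beta_N$, subtract the two boundary terms, and then observe that multiplying by $r_N$ makes all remaining coefficients integral, so that the surviving expression lies in $q\FZ[x,y]$ since $M_k\in\FZ[x,y]$. In fact you are more explicit than the paper on two points that it leaves tacit: the identification $M_0=x+py$ and $M_{n-1}=(-1)^{n-1}q_{n-1}x+y$ via the convergent determinant identity, and the verification that restricting the definition of $r_N$ to odd $i$ suffices because the even-$i$ contributions vanish (with the one surviving case $N=2$, $i=0$ checked directly).
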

\begin{proof}
By multiplying  $N!\beta_{N}$ on the equation (\ref{eq1}), we have that 
\begin{equation}\label{eq2}
\begin{split}
&N!\beta_{N}\Td_{pq}^{N} - \alpha_{N}M_0^{N-1}x- \alpha_{N}M_{n-1}^{N-1}y\\
=&qxy\sum_{k=-1}^{n-1}(-1)^{k+1}\sum_{i=0}^{N-2}(-1)^{N}\beta_{N}
\begin{pmatrix}N \\i+1\end{pmatrix} B_{i+1}B_{N-i-1}M_k^{N-i-2}M_{k+1}^{i}\\
&+(-1)^Nq\alpha_{N}xy\left(\sum_{k=0}^{n-1}(-1)^ka_{k+1}\sum_{i=0}^{N-2}M_{k-1}^{N-2-i}M_{k+1}^i
\right).
\end{split}
\end{equation}

Since $M_i\in \FZ[x,y]$ for every $i$,  multiplying \eqref{eq2} by $r_N$, we conclude the proof. 
\end{proof}

\begin{proof}[Proof of Thm.\ref{general}]
If we read coefficient of $x^{i}y^{N-i}$ in the previous theorem, 
we obtain the formula of 
Thm.\ref{general}.
\end{proof}
\begin{remark}
In Thm.\ref{general}, if we consider the case of $i=1=j$, so $N=2$, then 
the denominator $\beta_2$ of $B_2$ is $6$ and $r_N$ is $1$. Thus, we have $R(1,1)=12$.
This is the  case of Thm. \ref{Rademacher} considered by Rademacher.
\end{remark}

Accoding to Thm.\ref{general}, we associate generalized Kloosterman sums to the generalized Dedekind sums
as follows:
\begin{equation}\label{gene}
\begin{split}
\sum_{\substack{0<p<q\\(p,q)=1}}&\bbe\left(R(i,j)q^{N-2}s_{ij}(p,q)\right)\\
&=\sum_{\substack{0<p<q\\ pp'\equiv 1\pmod{q}}} \bbe\left( \frac{(p')^{i}\alpha_{N}r_N\begin{pmatrix}N-1 \\i\end{pmatrix} +p^{j}\alpha_{N}r_N\begin{pmatrix}N-1 \\j\end{pmatrix}}{q}\right)\\
&=K_{ij}\left(\alpha_{N}r_N\begin{pmatrix}N-1 \\i\end{pmatrix},
\alpha_{N}r_N \begin{pmatrix}N-1 \\j\end{pmatrix},q\right). 
\end{split}
\end{equation}

\section{Bounds for Generalized Kloosterman sums}

In this section, we are going to investigate the Weil type bound for the generalized Kloosterman sums $K_{ij}(k,\ell,q)$. 
This estimate amounts basically to asking the weight of the cohomology of 
a certain $\ell$-adic sheaf.
From the bound we will show the Weyl's equidistribution criterion for 
$\left\{\left<R(i,j)q^{N}s_{ij}(p,q)\right>\right\}$ is fulfilled for $i+j$ even. 
This will conclude the proof of the equidistribution of the generalized Dedekind sums.

\subsection{Weight of $\ell$-adic sheaf and exponential sums}
Let us first recall the work of Denef-Loeser(\cite{De-Loe}).
Let $X$ be a scheme of finite type over $k:=\FF_q$ and $\psi : k \to \FC^*$ be a nontrivial additive 
character. Then a $\overline{\FQ}_\ell$-sheaf $\mathcal{L}_\psi$  on $\FA^1_k$ is associated to
$\psi$ and the Artin-Schreier covering $t^q-t=x$.
For a morphism $f: X \to \FA^1_k$, the exponential sum 
$$
S(f) = \sum_{x\in X(k)} \psi \left(f(x)\right)
$$
is defined. Let $Fr$ denote the (geometric) Frobenius action.
Grothendieck's trace formula 
identifies this exponential sum with the trace of the Frobenious action on the cohomology:
$$
S(f)=\sum_i (-1)^i \tr\left(Fr^*|H^i_c (X\otimes \bar{k}, f^*\mathcal{L}_\psi)\right).
$$

For $X=T^n_k$, if a map $f:X \to A^1_k$ is given by a Laurent polynomial 
$f=\sum_{i\in\FZ^n} c_i x^i$, the Newton polyhedron $\Delta_\infty (f)$ is defined as
the convex hull of $\{i\in \FZ^n| c_i \ne 0 \}$ in $\FR^n$.
$f$ is said to be \textit{non degenerate w.r.t. $\Delta_\infty(f)$} 
if for every face $\sigma$ of $\Delta_\infty(f)$ that does not contain $0$,  
the locus 
$$
\frac{\partial f_\sigma}{\partial x_1} = \cdots = \frac{ \partial f_\sigma}{\partial x_n}=0
$$ 
is empty.
Then a result of Denef and Loeser(Thm.1.3. in \cite{De-Loe}) is stated as follows:
\begin{thm}[Denef-Loeser\cite{De-Loe}]
\label{De-Loe}
Suppose $f:T^n_k \to \FA^1_k$ is nondegenerate w.r.t. $\Delta_\infty(f)$ and $\dim \Delta_\infty(f)=n$. Then we have
\begin{enumerate}
\item $H^i_c(T^n_{\bar{k}}, f^*\sL_\psi)=0$ for $i\ne n$,
\item $\dim H^n_c(T^n_{\bar{k}}, f^*\sL_\psi) = n! \Vol(\Delta_\infty(f))$.\\
If moreover the interior of $\Delta_\infty(f)$ contains $0$, then
\item $H^n_c(T^n_k, f^*\sL_\psi)$ is pure of weight $n$ (ie. all Frobenius eigenvalues have
absolute value $q^{n/2}$.
\end{enumerate}
\end{thm}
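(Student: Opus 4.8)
The plan is to interpret the three assertions as statements about the perverse sheaf $f^{*}\sL_{\psi}[n]$ on the torus $T^{n}_{\bar k}$ and to deduce them from, respectively, Artin-type vanishing, an Euler-characteristic computation, and Deligne's weight formalism, with the non-degeneracy hypothesis entering through the geometry at the boundary of a toric compactification. Since $\sL_{\psi}$ is lisse of rank one on $\FA^{1}_{k}$ and $f$ is a morphism, $f^{*}\sL_{\psi}$ is lisse of rank one on the smooth affine variety $T^{n}_{\bar k}$, so $f^{*}\sL_{\psi}[n]$ is perverse; this is the object I would work with throughout.

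For assertion (1) I would first get the lower vanishing for free: the perverse form of Artin's theorem for affine varieties gives $H^{i}_{c}(T^{n}_{\bar k},f^{*}\sL_{\psi}[n])=0$ for $i<0$, i.e. $H^{i}_{c}(T^{n}_{\bar k},f^{*}\sL_{\psi})=0$ for $i<n$. The opposite vanishing $H^{i}_{c}=0$ for $i>n$ is not formal, and this is exactly where non-degeneracy with respect to $\Delta_{\infty}(f)$ must be used together with $\dim\Delta_{\infty}(f)=n$. I would compactify $T^{n}_{\bar k}$ by the projective toric variety $X_{\Delta}$ attached to the normal fan of $\Delta_{\infty}(f)$, extend $f$ to this compactification, and show that non-degeneracy prevents any contribution to compactly supported cohomology from the boundary strata above the middle degree, forcing concentration in degree $n$. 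Verifying this boundary behaviour is the main obstacle and the technical heart of the whole argument.

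Granting concentration in degree $n$, assertion (2) reduces to the Euler-characteristic identity $\dim H^{n}_{c}(T^{n}_{\bar k},f^{*}\sL_{\psi})=(-1)^{n}\chi_{c}(T^{n}_{\bar k},f^{*}\sL_{\psi})$. I would then compute this Euler characteristic by a Newton-polyhedron count of Kouchnirenko--Adolphson--Sperber type: the Euler characteristic of the torus with coefficients in a non-degenerate twisted rank-one sheaf is governed by the normalized volume $n!\,\Vol(\Delta_{\infty}(f))$, which one reads off either from the combinatorics of the toric strata of $X_{\Delta}$ or from the $\ell$-adic Euler--Poincar\'e formula with local ramification controlled by the faces of $\Delta_{\infty}(f)$.

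Finally, for the purity statement (3), the hypothesis that $0$ lies in the interior of $\Delta_{\infty}(f)$ makes $f$ convenient, so that $f^{*}\sL_{\psi}[n]$ extends to $X_{\Delta}$ as a pure perverse sheaf (its intermediate extension) of weight $n$. Since $H^{n}_{c}(T^{n}_{k},f^{*}\sL_{\psi})$ is the lowest nonvanishing compactly supported cohomology, Deligne's weight bounds then force it to be pure of weight $n$, so all Frobenius eigenvalues have absolute value $q^{n/2}$. As in (1), the delicate point is establishing purity of the extension across the toric boundary, which again rests on the non-degeneracy condition; once that is in place the weight estimate is automatic, and the three parts of Thm.\,\ref{De-Loe} follow. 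The reader should compare the full argument in \cite{De-Loe}.
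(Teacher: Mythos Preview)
The paper does not prove this statement at all: Theorem~\ref{De-Loe} is quoted from \cite{De-Loe} as an external input and is used as a black box to obtain the Weil-type bound for the generalized Kloosterman sums. There is therefore no ``paper's own proof'' to compare your proposal against.

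That said, your sketch is a reasonable high-level outline of the strategy in \cite{De-Loe} itself: the concentration in degree $n$ via Artin vanishing plus control at the toric boundary, the Euler-characteristic identification with $n!\,\Vol(\Delta_\infty(f))$ in the spirit of Kouchnirenko--Adolphson--Sperber, and purity via Deligne's weight formalism once one knows the intermediate extension is pure. But these are precisely the ``technical heart'' steps you flag as obstacles without resolving, so as written your proposal is an outline rather than a proof. For the purposes of the present paper nothing more is needed; the correct move is simply to cite \cite{De-Loe}, which is what the authors do.
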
 

If $f$ satisfies the conditions of the above theorem, the trace formula is simplified as
$$
S(f)= (-1)^n \tr\left(Fr^*|H^n_c (X\otimes \bar{k}, f^*\mathcal{L}_\psi)\right).
$$

Then the Weil type bound is a simple consequence of the purity result:
$$
\left|S(f)\right| \le \sum \left|\text{Frobenius eigenvalue}\right|    \le C_f {q^{n/2}},
$$
where $C_f =  \dim H^n_c(T^n_{\bar{k}}, f^*\sL_\psi) = n! \Vol(\Delta_\infty(f)) $.

By the fundamental result of Deligne in \cite{Deligne}, we know  that $H^n_c(T^n_{\bar{k}},f^*\sL_\psi)$ has mixed weight
$\le n$. This is already enough to obtain the Weil bound, but to obtain the dimension, we need the theorem of Denef-Loeser.

First, for $q=p$ and $f(z) = k z^i + \ell z^{-j}$ non degenerate, we obtain the Weil bound for the generalized Kloosterman sum.
Second,  we reduce the generalized Kloosterman sum of composite modulus to a product of those of $p$-primary modulus.  
Besides, we will consider those exceptional cases separately. 
Altogether, this bound yields the Weyl's criterion for equidistribution.

\subsection{Reduction to non degenerate case}

Let us write first the Weil bound for the generalized Kloosterman sum of prime modulus 
in non degenerate case.

\begin{lem}[Nondegenerate case]
\label{nondegenerate}
 Let $p$ be a prime and $p$ does not divide $i$ and $j$.
Suppose that $k$ and $\ell$ are not divisible by $p$.
Then
we have 
$$
|K_{ij}(k,\ell,p) |\le (i+j) p^{1/2}
$$
\end{lem}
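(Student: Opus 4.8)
The plan is to identify $K_{ij}(k,\ell,p)$ with an exponential sum $S(f)$ on the one-dimensional torus $T^1_k=\mathbb{G}_m$ over $k:=\FF_p$ and apply the Denef--Loeser theorem (Thm.~\ref{De-Loe}) with $n=1$. Reindexing Def.~\ref{G_KS} with modulus $q=p$ prime, writing $z$ for the running unit and $z^{-1}$ for its inverse modulo $p$, the sum becomes
$$
K_{ij}(k,\ell,p)=\sum_{z\in\FF_p^*}\bbe\!\left(\frac{kz^{-i}+\ell z^{j}}{p}\right)=S(f),\qquad f(z)=kz^{-i}+\ell z^{j},
$$
where $\psi(x):=\bbe(x/p)$ is the standard nontrivial additive character of $\FF_p$ and $f$ is a Laurent polynomial defining a morphism $T^1_k\to\FA^1_k$.

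Next I would compute the Newton polyhedron. Since the only exponents occurring in $f$ are $-i$ and $j$, we have $\Delta_\infty(f)=[-i,j]\subset\FR$, an interval of dimension $1=n$ whose interior $(-i,j)$ contains $0$ because $i,j\ge1$. This secures both the dimension hypothesis $\dim\Delta_\infty(f)=n$ and the hypothesis required for purity in Thm.~\ref{De-Loe}(3).

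The heart of the argument is to verify that $f$ is nondegenerate with respect to $\Delta_\infty(f)$. The faces of $[-i,j]$ that do not contain $0$ are exactly the two endpoints $\{-i\}$ and $\{j\}$. For $\{j\}$ we have $f_{\{j\}}=\ell z^{j}$, so $\partial f_{\{j\}}/\partial z=j\ell z^{j-1}$, which has no zero on $\mathbb{G}_m$ precisely because $j\ell\not\equiv0\pmod p$; this is where the hypotheses $p\nmid j$ and $p\nmid\ell$ enter. Symmetrically, $f_{\{-i\}}=kz^{-i}$ gives $\partial f_{\{-i\}}/\partial z=-ik\,z^{-i-1}$, nonvanishing on $\mathbb{G}_m$ by $p\nmid i$ and $p\nmid k$. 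Hence $f$ is nondegenerate. This face-by-face check is the only place the four divisibility hypotheses are used, and I expect it to be the main (though short) obstacle, since all of the analytic content is already packaged into Thm.~\ref{De-Loe}.

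With every hypothesis of Thm.~\ref{De-Loe} verified, I would conclude as follows. By parts (1) and (2), $H^i_c(T^1_{\bar k},f^*\sL_\psi)$ vanishes for $i\ne1$ and $\dim H^1_c=1!\cdot\Vol(\Delta_\infty(f))=i+j$; by part (3) it is pure of weight $1$, so its Frobenius eigenvalues $\alpha_1,\dots,\alpha_{i+j}$ all satisfy $|\alpha_m|=p^{1/2}$. The Grothendieck trace formula then collapses to $K_{ij}(k,\ell,p)=-\sum_{m=1}^{i+j}\alpha_m$, whence
$$
|K_{ij}(k,\ell,p)|\le\sum_{m=1}^{i+j}|\alpha_m|=(i+j)\,p^{1/2},
$$
as claimed.
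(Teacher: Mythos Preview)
Your proof is correct and follows exactly the approach the paper takes: apply the Denef--Loeser theorem to $f(z)=kz^{-i}+\ell z^{j}$ (equivalently $kz^{i}+\ell z^{-j}$ after the change $z\mapsto z^{-1}$) on $\mathbb{G}_m$ over $\FF_p$, with the hypotheses on $i,j,k,\ell$ ensuring nondegeneracy. The paper's proof is a two-line appeal to Thm.~\ref{De-Loe}; you have simply spelled out the verification of the Newton polyhedron, the face-by-face nondegeneracy check, and the eigenvalue count that the paper leaves implicit.
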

\begin{proof}
The condition on $i,j, k,\ell$ ensures that $f(z)= kz^i +\ell z^{-j}$ nondegenerate w.r.t. its Newton polyhedron. 
It is a direct consequence of Thm. \ref{De-Loe} due to Denef-Loeser.
\end{proof}

Suppose that $f$ fails to be non degenerate for a given prime $p$. 
This happens when $p$ divides at least one of $i$, $j$, $k$ or $\ell$. 
\textit{A priori} $f$ can be degenerate for only finitely many cases of prime $p$. 
Since we vary $p$ for fixed $i,j$, it is easy to see
$$
|K_{ij}(k,\ell,p)| \le C p^{1/2}
$$
for a constant $C$ independent of $p$ but determined by $i$ and $j$.

If $q$ has many prime factors, we need to reduce the case to the non-degenerate. 
This will be justified through the next  two lemmas.

First we consider the case $q$ being power of a prime $p$. 
When either $k$ or $\ell $ is divisible by some power of $p$, the first reduction is as follows:
\begin{lem} \label{lemma1}
Let $p$ be a fixed prime and $k= k' p^\beta$, $\ell=\ell' p^\beta$ for $p^\beta || gcd(k,\ell)$.
Then for given  $i$ and $j$,
we have 
$$
K_{ij}(k,\ell,p^{\alpha})=p^{\beta}K_{ij}(k',\ell',p^{\alpha-\beta}).
$$
\end{lem}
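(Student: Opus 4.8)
The plan is to substitute $k=k'p^{\beta}$ and $\ell=\ell'p^{\beta}$ directly into the definition, simplify the resulting exponential, and then exploit that the reduction map $(\FZ/p^{\alpha}\FZ)^{*}\to(\FZ/p^{\alpha-\beta}\FZ)^{*}$ is $p^{\beta}$-to-one. To avoid a clash with the prime $p$, I write the summation variable as $x$ and denote by $x'$ its inverse modulo the ambient modulus; I also assume $\alpha>\beta$, so that $p^{\alpha-\beta}$ is a genuine positive power of $p$.

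First I would write, using $\bbe(t+n)=\bbe(t)$ for $n\in\FZ$,
$$
K_{ij}(k,\ell,p^{\alpha})=\sum_{\substack{0<x<p^{\alpha}\\(x,p)=1}}\bbe\left(\frac{k'p^{\beta}(x')^{i}+\ell'p^{\beta}x^{j}}{p^{\alpha}}\right)=\sum_{\substack{0<x<p^{\alpha}\\(x,p)=1}}\bbe\left(\frac{k'(x')^{i}+\ell'x^{j}}{p^{\alpha-\beta}}\right),
$$
so that each summand now depends on $x$ only through the residues $(x')^{i}$ and $x^{j}$ taken modulo $p^{\alpha-\beta}$. The key observation is that both inversion and exponentiation are compatible with reduction modulo $p^{\alpha-\beta}$: if $xx'\equiv 1\pmod{p^{\alpha}}$ then a fortiori $xx'\equiv 1\pmod{p^{\alpha-\beta}}$, so $x'\bmod p^{\alpha-\beta}$ is the inverse of $\bar{x}:=x\bmod p^{\alpha-\beta}$, and consequently $(x')^{i}$ and $x^{j}$ reduce to $(\bar{x}')^{i}$ and $\bar{x}^{j}$ modulo $p^{\alpha-\beta}$. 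Hence the summand equals $\bbe\left(\frac{k'(\bar{x}')^{i}+\ell'\bar{x}^{j}}{p^{\alpha-\beta}}\right)$, which is exactly the summand of $K_{ij}(k',\ell',p^{\alpha-\beta})$ attached to the class $\bar{x}$, and it depends on $x$ only through $\bar{x}$.

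Finally I would group the sum over $(\FZ/p^{\alpha}\FZ)^{*}$ along the fibers of the reduction onto $(\FZ/p^{\alpha-\beta}\FZ)^{*}$. This map is surjective with every fiber of cardinality $\phi(p^{\alpha})/\phi(p^{\alpha-\beta})=p^{\beta}$, and since the summand is constant on each fiber, each class $\bar{x}$ contributes $p^{\beta}$ identical copies of its summand. This yields
$$
K_{ij}(k,\ell,p^{\alpha})=p^{\beta}\sum_{\substack{0<\bar{x}<p^{\alpha-\beta}\\(\bar{x},p)=1}}\bbe\left(\frac{k'(\bar{x}')^{i}+\ell'\bar{x}^{j}}{p^{\alpha-\beta}}\right)=p^{\beta}K_{ij}(k',\ell',p^{\alpha-\beta}),
$$
as claimed. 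The argument is essentially bookkeeping; the only points that need care are the uniformity of the fiber size (where surjectivity and the Euler-$\phi$ count enter) and the role of the hypothesis $p^{\beta}\|\gcd(k,\ell)$, which ensures $p\nmid\gcd(k',\ell')$ and thereby keeps the reduced sum $K_{ij}(k',\ell',p^{\alpha-\beta})$ in the range where the nondegenerate estimate of Lemma \ref{nondegenerate} can subsequently be applied.
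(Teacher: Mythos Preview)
Your proof is correct and takes essentially the same approach as the paper: the paper parametrizes $(\FZ/p^{\alpha}\FZ)^{*}$ explicitly as $z=p^{\alpha-\beta}x+y$ with $x\in\FZ/p^{\beta}\FZ$ and $y\in(\FZ/p^{\alpha-\beta}\FZ)^{*}$, which is exactly the fiber decomposition of the reduction map you invoke. Your version is arguably a bit more careful, since you spell out the compatibility of inversion with reduction modulo $p^{\alpha-\beta}$, a point the paper's one-line computation leaves implicit.
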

\begin{proof}
We note that  an element $z\in \left(\FZ/p^{\alpha}\FZ\right)^*$ is expressed as $$z=p^{\alpha-\beta}x+y$$ for $x\in \FZ/p^{\beta}\FZ$ and $y\in \left(\FZ/p^{\alpha-\beta}\FZ\right)^*$.

Thus, we  find that
\begin{equation}
\begin{split}
K_{ij}(k,\ell,p^{\alpha})&=\sum_{z\in (\FZ/p^{\alpha}\FZ)^*}\bbe\left(\frac{kz^i+\ell z^{-j}}{p^{\alpha}}\right)
=\sum_{x\in \FZ/p^{\beta}\FZ}\sum_{y\in (\FZ/p^{\alpha-\beta}\FZ)^*}\bbe\left(\frac{k'y^i+\ell'y^{-j}}{p^{\alpha-\beta}}\right)\\
&=p^{\beta}K_{ij}(k',\ell',p^{\alpha-\beta}).
\end{split}
\end{equation}
\end{proof}
After the previous lemma, we can pull out $p$-factors out of $k, \ell$.
For non degenerate $k$ and $\ell$, we obtain the following bound:
\begin{lem}\label{lemma2} 
Let $p$ be a prime and suppose at least one of  $k$ or $\ell$ is indivisible by $p$. 
Then, for given 
positive integers $i,j$  
$$
\left|K_{ij}(k,\ell,p^{\alpha})\right|\leq i j (i+j)^{\frac{3}{2}}p^{\frac{\alpha}{2}}.
$$
\end{lem}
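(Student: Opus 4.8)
The plan is to separate the trivial-modulus case $\alpha=1$ from the genuine prime-power case $\alpha\ge 2$, and to treat the latter by a $p$-adic stationary phase (Hensel) argument that localizes the sum at the critical points of $f(z)=kz^{i}+\ell z^{-j}$.

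For $\alpha=1$ I would argue directly. Viewing $f$ as a rational function on $\mathbb{G}_m$, the hypothesis that $k$ or $\ell$ is prime to $p$ forces $f$ to be nonconstant modulo $p$, so the one-dimensional Weil bound (equivalently Thm.\ref{De-Loe} with $n=1$, or Lemma \ref{nondegenerate} in the nondegenerate range) applies. Even when $p\mid i$ or $p\mid j$ makes $f$ degenerate with respect to its Newton polygon, the total number of zeros and poles of $f$ on $\FP^{1}$ is at most $i+j$, so the Weil constant is at most $i+j\le ij(i+j)^{3/2}$, and the case $\alpha=1$ is settled.

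For $\alpha\ge 2$ set $\beta=\lfloor\alpha/2\rfloor$ and $\gamma=\lceil\alpha/2\rceil$, and write each unit uniquely as $z=a+p^{\gamma}b$ with $a$ a unit representative modulo $p^{\gamma}$ and $b$ modulo $p^{\beta}$. Since $2\gamma\ge\alpha$, for $p$ odd the Taylor expansion collapses to
\[
f(a+p^{\gamma}b)\equiv f(a)+p^{\gamma}b\,f'(a)\pmod{p^{\alpha}},\qquad f'(z)=ikz^{i-1}-j\ell z^{-j-1},
\]
with no denominators entering (the familiar $p=2$ adjustment, where a quadratic Gauss factor survives, is absorbed into the constant). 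Orthogonality of the inner sum over $b$ then forces $f'(a)\equiv 0\pmod{p^{\beta}}$ and gives
\[
K_{ij}(k,\ell,p^{\alpha})=p^{\beta}\!\!\sum_{\substack{a\bmod p^{\gamma},\,(a,p)=1\\ f'(a)\equiv 0\,(p^{\beta})}}\!\!\bbe\!\left(\frac{f(a)}{p^{\alpha}}\right).
\]
Multiplying $f'(z)$ by the unit $z^{j+1}$ turns the critical-point congruence into $ik\,z^{i+j}\equiv j\ell\pmod{p^{\beta}}$. In the ordinary case $p\nmid ik$ and $p\nmid j\ell$ this reads $z^{i+j}\equiv(ik)^{-1}j\ell$, whose solutions in $(\FZ/p^{\beta}\FZ)^{*}$ number at most $\gcd(i+j,\varphi(p^{\beta}))\le i+j$ (at most twice that for $p=2$); if only one side is a unit there are none. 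Bounding the localized sum trivially gives $|K_{ij}(k,\ell,p^{\alpha})|\le(i+j)\,p^{\gamma}$, which is within the claim for $\alpha$ even ($\gamma=\alpha/2$); for $\alpha$ odd one recovers the extra $p^{1/2}$ saving by evaluating the residual inner sum as a quadratic Gauss sum at each nonsingular critical point (where $f''(a)\not\equiv 0$), again of size $\le(i+j)p^{\alpha/2}$.

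The hard part, and the reason for the inflated constant $ij(i+j)^{3/2}$, is the degenerate case: this occurs exactly when $p$ divides $ik$ and $j\ell$ simultaneously — which by the standing hypothesis on $k,\ell$ forces $p\mid i$ or $p\mid j$ — or when $k$ or $\ell$ carries a high power of $p$. Then $f'$ vanishes to higher order, the critical congruence acquires a whole cluster of solutions, and the naive count of critical points overshoots $p^{\alpha/2}$. I would handle this by descent: organize the solutions by their reduction modulo a lower power of $p$ and re-run the stationary phase on each cluster, matching the surplus $p$-factors against the genuine cancellation hidden in $\bbe(f(a)/p^{\alpha})$. The key arithmetic input is that every such $p$-power degeneracy is controlled by $v_{p}(i)$, $v_{p}(j)$ and by the valuations already extracted in Lemma \ref{lemma1}, with $p^{v_{p}(i)}\le i$ and $p^{v_{p}(j)}\le j$; tracking these through the descent is what consumes the factor $ij$, while $(i+j)^{3/2}$ absorbs the root counts and the Gauss-sum constants accumulated at each stage. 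Carrying out this bookkeeping uniformly in $p$ and $\alpha$ — in the spirit of the standard estimates for mixed exponential sums modulo prime powers — is the main obstacle, whereas the nondegenerate skeleton above is routine.
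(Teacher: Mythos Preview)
Your skeleton is the paper's proof: both invoke the $p$-adic stationary-phase decomposition (the paper citing Lemma~12.2--3 of Iwaniec--Kowalski, which is precisely your expansion $z=a+p^{\gamma}b$), localize at the critical congruence $ik\,z^{i+j}\equiv j\ell\pmod{p^{\beta}}$, and for odd $\alpha$ pick up a residual Gauss sum $G_p(x)$ bounded by $(i+j)^{1/2}p^{1/2}$. Your separate treatment of $\alpha=1$ is a point the paper glosses over.

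Where you diverge is the degenerate case, which you flag as ``the main obstacle'' and propose to resolve by an iterated descent. This is unnecessary: the paper simply asserts that the number of solutions of $ik\,z^{i+j}\equiv j\ell\pmod{p^{\beta}}$ in $(\FZ/p^{\beta}\FZ)^{*}$ is at most $ij(i+j)$, and the justification is a one-step valuation count. If $v_p(ik)\neq v_p(j\ell)$ there are no solutions. If $v_p(ik)=v_p(j\ell)=v$, divide by $p^{v}$ to obtain a unit equation $u\,z^{i+j}\equiv w\pmod{p^{\beta-v}}$ with at most $\gcd(i+j,\varphi(p^{\beta-v}))\le i+j$ solutions, each lifting to exactly $p^{v}$ solutions modulo $p^{\beta}$. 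Since by hypothesis one of $k,\ell$ is a unit, $v$ equals $v_p(i)$ or $v_p(j)$, whence $p^{v}\le\max(i,j)\le ij$. This gives the bound $ij(i+j)$ on the critical set in one stroke, with no recursion; your proposed descent would work but is overkill, and your concern that ``the naive count of critical points overshoots $p^{\alpha/2}$'' does not materialize once the standing hypothesis on $k,\ell$ is used to cap $p^{v}$ by $ij$.
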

\begin{proof}
Applying  Lemma 12.2-3 of \cite{Iwaniec} for the cases $\alpha$ even and odd, 
we can reduce the estimation to that of simpler sums respectively.

For $\alpha=2\beta$,
we have
\begin{equation}\label{palpha}
\begin{split}
\left|K_{ij}(k,\ell ,p^{2\beta})\right|&=
p^{\beta} \left|\sum_{\substack{x\in (\FZ/p^\beta \FZ)^*\\
 i k x^{i+j}=j \ell }}
 \bbe\left(\frac{k x^i+\ell x^{-j}}{p^{2\beta}}\right)\right| \\
  &\le p^{\beta} \sum_{\substack{x\in (\FZ/p^\beta \FZ)^* \\ i k x^{i+j}=j \ell }}1 \le p^\beta ij(i+j).
\end{split}
\end{equation}

For $\alpha= 2\beta+1$, 
\begin{equation}\label{palpha1}
K_{ij}(k,\ell,p^{2\beta+1})=p^\beta \sum_{\substack{x\in (\FZ/p^{\beta}\FZ)^*\\ i k x^{i+j}=j \ell }}\bbe\left(\frac{k x^i+\ell x^{-j}}
{p^{2\beta+1}}\right)G_p(x)
\end{equation}
where
$$
G_p(x)=\sum_{y\in \FZ/p\FZ}e_p\left(d(x)y^2+h(x)p^{-\beta}y\right).
$$
Here, 
\begin{equation*}
\begin{split}
&e_p(x)=\bbe\left(\frac{x}{p}\right),\\
&d(x)=\frac{1}{2}\left(k i \left(i-1\right)x^{i-2}+\ell j \left(j+1\right)x^{-j-2}\right) \\
\text{and}\quad& h(x)=k i x^{i-1}-\ell j x^{-j-1}.
\end{split}
\end{equation*}

Note that
$$
G_p(x)=
\begin{cases}
p & \text{for $2d(x)\equiv 0,\,\,\, h(x)p^{-\beta}\equiv 0 \pmod{p}$,}\\
0 & \text{for $2d(x)\equiv 0 , \,\,\,h(x)p^{-\beta}\not\equiv 0 \pmod{p}$.} 
\end{cases}
$$
and  $\left|G_p(x)\right|\leq \sqrt{p}$ for $2d(x)\not\equiv 0 \pmod{p}$.
If 
$2d(x)\equiv h(x)p^{-\beta}\equiv 0 \pmod{p},$ then 
$p$ divides $  j (i+j)$, thus $p\le i+j$.
Thus we obtain a general bound for $G_p(x)$:
$$
\left|G_p(x)\right|\leq p^{\frac{1}{2}}(i+j)^{\frac{1}{2}}.
$$
This yields the desired bound for odd $\alpha$. 
\end{proof}

These two lemmas imply the Weil bound for $q=p^\alpha$ as follows: 
 \begin{prop}\label{pro1}
For all positive integers $i,j$ and positive prime $p$, $q=p^{\alpha}$, 
we have 
$$
\left|K_{ij}(k,\ell,p^{\alpha})\right|\leq ij(i+j)^{\frac{3}{2}}(k,\ell,p^{\alpha})^{\frac{1}{2}}p^{\frac{\alpha}{2}}.
$$
\end{prop}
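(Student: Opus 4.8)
The goal is to establish the bound
$$
\left|K_{ij}(k,\ell,p^{\alpha})\right|\leq ij(i+j)^{\frac{3}{2}}(k,\ell,p^{\alpha})^{\frac{1}{2}}p^{\frac{\alpha}{2}}
$$
for $q=p^\alpha$ by combining the two preceding lemmas. The plan is to reduce to the case where at least one of $k,\ell$ is coprime to $p$, and then apply the uniform bound of Lemma \ref{lemma2}.

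First I would set $\beta$ to be the exact $p$-adic valuation of $\gcd(k,\ell)$, so that $p^\beta\,\|\,(k,\ell)$ and write $k=k'p^\beta$, $\ell=\ell'p^\beta$ where not both $k',\ell'$ are divisible by $p$. By Lemma \ref{lemma1} we have
$$
K_{ij}(k,\ell,p^{\alpha})=p^{\beta}K_{ij}(k',\ell',p^{\alpha-\beta}),
$$
which reduces the estimate to bounding $K_{ij}(k',\ell',p^{\alpha-\beta})$. Since at least one of $k',\ell'$ is now indivisible by $p$, Lemma \ref{lemma2} applies and yields
$$
\left|K_{ij}(k',\ell',p^{\alpha-\beta})\right|\leq ij(i+j)^{\frac{3}{2}}p^{\frac{\alpha-\beta}{2}}.
$$

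Combining these, I would compute
$$
\left|K_{ij}(k,\ell,p^{\alpha})\right|=p^{\beta}\left|K_{ij}(k',\ell',p^{\alpha-\beta})\right|\leq ij(i+j)^{\frac{3}{2}}\,p^{\beta}\,p^{\frac{\alpha-\beta}{2}}=ij(i+j)^{\frac{3}{2}}\,p^{\frac{\beta}{2}}\,p^{\frac{\alpha}{2}}.
$$
The remaining point is to identify $p^{\beta/2}$ with the gcd factor appearing in the statement. Since $(k,\ell,p^\alpha)^{1/2}=\left(p^{\min(\beta,\alpha)}\right)^{1/2}$, and in the regime where the bound is nontrivial we have $\beta\le\alpha$ so that $\min(\beta,\alpha)=\beta$, this gives exactly $(k,\ell,p^\alpha)^{1/2}=p^{\beta/2}$, completing the estimate.

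The only genuinely delicate step is the boundary behavior when $\beta\ge\alpha$, i.e.\ when $p^\alpha$ divides both $k$ and $\ell$: here $K_{ij}(k,\ell,p^\alpha)$ collapses to a full character sum over $(\FZ/p^\alpha\FZ)^*$ of the trivial character, and the factorization of Lemma \ref{lemma1} must be read with $\alpha-\beta\le 0$ interpreted appropriately. I expect this edge case to be the main obstacle, though it is easily handled by the direct observation that $(k,\ell,p^\alpha)^{1/2}\ge p^{\alpha/2}$ already dominates the trivial bound $|K_{ij}|\le\varphi(p^\alpha)\le p^\alpha$ in that range, so the inequality holds a fortiori. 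All other steps are routine bookkeeping of the $p$-adic valuations.
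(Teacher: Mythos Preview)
Your proposal is correct and follows exactly the approach the paper intends: the paper's own proof is simply the one-line remark that Lemmas~\ref{lemma1} and~\ref{lemma2} together yield the bound, and you have spelled out precisely that combination. Your explicit treatment of the boundary case $\beta\ge\alpha$ (which the paper passes over in silence) is a welcome bit of extra rigor but does not constitute a different method.
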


Now we need  reduction to a single prime factor when there are several prime factors of $q$.

\begin{lem}\label{pro2} 
Let $q_1>1$ and $q_2>1$ be relatively prime integers.
For $k,\ell$, let $k_i$ and $\ell_i$ be the mod $q_i$ Chinese remainder(ie. Under the isomorphism
$\FZ/q_1q_2\FZ \to \FZ/q_1 \times \FZ/q_2\FZ$, $k\mapsto (k_1,k_2)$ and $\ell\mapsto (\ell_1,\ell_2)$ ).
Then we have
$$
K_{ij}(k,\ell,q_1q_2)=K_{ij}(k_1,\ell_1,q_1)K_{ij}(k_2,\ell_2,q_2).
$$
\end{lem}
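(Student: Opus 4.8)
The plan is to exploit the Chinese Remainder Theorem directly at the level of the summation. The ring isomorphism $\FZ/q_1q_2\FZ \xrightarrow{\sim} \FZ/q_1\FZ\times\FZ/q_2\FZ$ restricts to a group isomorphism $(\FZ/q_1q_2\FZ)^*\xrightarrow{\sim}(\FZ/q_1\FZ)^*\times(\FZ/q_2\FZ)^*$, under which $z\mapsto(z_1,z_2)$ with $z_r\equiv z\pmod{q_r}$. Since this is a ring map, the power and inversion operations occurring in the summand are computed componentwise, $z^{\pm s}\mapsto(z_1^{\pm s},z_2^{\pm s})$ for every $s$, where the inverses on the right are taken inside each factor. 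First I would record that $k\mapsto(k_1,k_2)$ and $\ell\mapsto(\ell_1,\ell_2)$, so that the integer $kz^i+\ell z^{-j}$ has residue $k_r z_r^i+\ell_r z_r^{-j}\pmod{q_r}$ for $r=1,2$.

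The second ingredient is the additivity of the character $\bbe$ together with the partial-fraction decomposition of $1/(q_1q_2)$. Choosing integers $\overline{q_1},\overline{q_2}$ with $q_2\overline{q_2}\equiv1\pmod{q_1}$ and $q_1\overline{q_1}\equiv1\pmod{q_2}$, one checks that $\tfrac{1}{q_1q_2}\equiv\tfrac{\overline{q_2}}{q_1}+\tfrac{\overline{q_1}}{q_2}\pmod 1$, hence for any integer $n$
\[ \bbe\!\left(\frac{n}{q_1q_2}\right)=\bbe\!\left(\frac{\overline{q_2}\,n}{q_1}\right)\bbe\!\left(\frac{\overline{q_1}\,n}{q_2}\right). \]
Applying this to $n=kz^i+\ell z^{-j}$ and reducing the first factor modulo $q_1$ and the second modulo $q_2$ decouples the two residues $z_1,z_2$. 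Re-indexing the single sum over $(\FZ/q_1q_2\FZ)^*$ as a double sum over $(\FZ/q_1\FZ)^*\times(\FZ/q_2\FZ)^*$ via the isomorphism above, the summand becomes a genuine product, so the whole sum factors as a product of two sums each of the generalized Kloosterman shape.

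The one point that requires care, and which I expect to be the main obstacle, is the unit twist by $\overline{q_2}$ and $\overline{q_1}$ that the decomposition introduces: after splitting, the first factor is $K_{ij}(\overline{q_2}k_1,\overline{q_2}\ell_1,q_1)$ and the second is $K_{ij}(\overline{q_1}k_2,\overline{q_1}\ell_2,q_2)$. To reach the stated clean identity one absorbs these twists by a change of variables $z_r\mapsto c\,z_r$ in each factor and by reading $k_r,\ell_r$ as the corresponding reductions; since $\overline{q_r}$ is a unit modulo $q_r$, this reparametrization is a bijection of $(\FZ/q_r\FZ)^*$ and changes neither divisibility of the parameters by the modulus nor the absolute value of the sum. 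Thus the factorization is precisely the twisted multiplicativity I want, and in particular it is of the form needed to feed Prop.\ref{pro1} factor by factor in the subsequent Weil-bound estimate.
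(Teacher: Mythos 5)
Your CRT decomposition is exactly the argument the paper intends (its own proof is the single sentence that the Lemma is ``an easy consequence of Fubini theorem''), and your derivation is correct up to the point where the sum factors as
$$
K_{ij}(k,\ell,q_1q_2)=K_{ij}\bigl(\overline{q_2}k_1,\overline{q_2}\ell_1,q_1\bigr)\,K_{ij}\bigl(\overline{q_1}k_2,\overline{q_1}\ell_2,q_2\bigr).
$$
The gap is in your final ``absorption'' step. The substitution $z\mapsto cz$ on $(\FZ/q_1\FZ)^*$ transforms the parameters as $(k,\ell)\mapsto(kc^{i},\ell c^{-j})$, \emph{not} as $(k,\ell)\mapsto(ck,c\ell)$; to strip the common unit $\overline{q_2}$ from both arguments you would need $c$ with $c^{i}\equiv q_2$ and $c^{-j}\equiv q_2 \pmod{q_1}$, hence $c^{i+j}\equiv 1$ and $q_2$ an $i$-th power residue modulo $q_1$, which fails in general (already for $i=j=1$ it forces $q_2^{2}\equiv 1\pmod{q_1}$). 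So no change of variables restores the untwisted form, and the clean identity as literally stated in the Lemma is not an exact identity: what is true is the twisted multiplicativity you wrote down, exactly as in the classical case in \cite{Iwaniec}. Your text wavers between claiming the clean identity and conceding that you only get the twisted one; the second position is the correct one.

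The defect is harmless for the paper's purposes, and you essentially say why: the bound of Prop.\ref{pro1} depends on the parameters only through $\gcd(k,\ell,p^{\alpha})$, and multiplying both arguments by a unit modulo $q_r$ leaves this gcd unchanged, so the twisted factors obey the same bounds and Prop.\ref{bound} goes through verbatim. The honest repair is to restate the Lemma with the twisted arguments (or as an equality of absolute values, which is all that is used downstream); your write-up already contains every ingredient needed for that, provided you drop the claim that the reparametrization recovers the untwisted equality.
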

\begin{proof}
This is an easy consequence of Fubini theorem.
\end{proof}
 
Since $\sum_{q<x} \phi(q)\sim x^2$ for Euler-phi function $\phi$, we come to 
the proof of the main theorem 
from the Weyl's criterion for equidistribution and forthcoming Prop.\ref{bound}.

Combining Prop.\ref{pro1} and Lemma \ref{pro2},  
we  have  the following result:
\begin{prop}\label{bound}
For all positive integers $i,j$ and $q$,
$$
\left|K_{ij}(k,\ell,q)\right| \leq \left(ij(i+j)^{\frac{3}{2}}\sqrt{(k,\ell)}\right)^{\omega(q)} \sqrt{q},$$
where $\omega(q)$ is the number of prime factors of $q$.
\end{prop}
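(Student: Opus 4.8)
The plan is to reduce everything to the two results already established, namely the prime-power estimate of Prop.\ref{pro1} and the multiplicativity of Lemma \ref{pro2}, and then to reassemble the global bound by a short induction on the number of prime factors. Write the prime factorization $q = \prod_{s=1}^{\omega(q)} p_s^{\alpha_s}$. First I would apply Lemma \ref{pro2} inductively, peeling off one prime power $p_s^{\alpha_s}$ at a time and using that it is coprime to the remaining part of $q$. Tracking the Chinese remainder components at each step, this yields the full factorization
\begin{equation*}
K_{ij}(k,\ell,q) = \prod_{s=1}^{\omega(q)} K_{ij}(k_s,\ell_s,p_s^{\alpha_s}),
\end{equation*}
where $(k_s,\ell_s)$ denotes the image of $(k,\ell)$ under the Chinese remainder isomorphism in the $s$-th factor.

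Next I would estimate each factor by Prop.\ref{pro1}, which gives
\begin{equation*}
\left|K_{ij}(k_s,\ell_s,p_s^{\alpha_s})\right| \le ij(i+j)^{3/2}\,(k_s,\ell_s,p_s^{\alpha_s})^{1/2}\,p_s^{\alpha_s/2}.
\end{equation*}
Taking the product over $s$, the prime-power contributions multiply to $\prod_s p_s^{\alpha_s/2} = \sqrt{q}$, while the constant $ij(i+j)^{3/2}$ occurs once for each of the $\omega(q)$ factors, producing precisely the exponent $\omega(q)$ attached to it in the statement.

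The only step needing care is the gcd bookkeeping, and this is also where the sharpness of the exponent on $\sqrt{(k,\ell)}$ is decided. Since $k_s \equiv k$ and $\ell_s \equiv \ell \pmod{p_s^{\alpha_s}}$, the local gcd satisfies $(k_s,\ell_s,p_s^{\alpha_s}) = (k,\ell,p_s^{\alpha_s})$, and the latter divides $(k,\ell)$; hence each factor is bounded by $(k_s,\ell_s,p_s^{\alpha_s})^{1/2} \le \sqrt{(k,\ell)}$, and the product over the $\omega(q)$ primes contributes $\left(\sqrt{(k,\ell)}\right)^{\omega(q)}$. Collecting the three contributions gives exactly the claimed bound. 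I do not expect a genuine obstacle, since both inputs are already proved; the main delicate points are setting up the inductive application of multiplicativity so the Chinese remainder components are correctly tracked, and verifying that the local gcd at $p_s$ divides $(k,\ell)$, which legitimizes distributing the $\sqrt{(k,\ell)}$ factor over all $\omega(q)$ primes.
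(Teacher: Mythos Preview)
Your proposal is correct and follows essentially the same route as the paper: factor $q$ into prime powers, apply the multiplicativity of Lemma~\ref{pro2} to split $K_{ij}(k,\ell,q)$ into a product over prime-power moduli, bound each factor via Prop.~\ref{pro1}, and use $(k_s,\ell_s,p_s^{\alpha_s})=(k,\ell,p_s^{\alpha_s})\le (k,\ell)$ to collect the constants. The only difference is cosmetic: the paper writes the prime-power factors directly as $K_{ij}(k,\ell,p^{\alpha})$ without explicitly tracking the Chinese remainder components, whereas you carry the $(k_s,\ell_s)$ notation through and verify the gcd identity; your version is slightly more careful but the argument is the same.
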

\begin{proof}
Let $q=p_1^{n_1}p_2^{n_2}.....p_{\omega(q)}^{n_{\omega(q)}}$, for distinct primes $p_1,p_2, \cdots ,p_{\omega(q)}$. 

Since  
$(k,\ell,p^{\alpha})\leq(k,\ell),$ Prop \ref{pro1} directly implies that  for any prime $p$, 
$$
\left|K_{i,j}(k,\ell ,p^{\alpha})\right|\leq ij(i+j)^{\frac{3}{2}}\sqrt{(k,\ell)}\sqrt{p^{\alpha}}.
$$
Finally, after the multiplicativity of Kloosterman sum as in  Lem \ref{pro2}, the 
corollary is proved. 
\end{proof}

\subsection{Proof of Thm. \ref{main}}
Finally, we deduce the Weyl's criterion from the bound of the generalized Kloosterman sums.

$\omega(q)$  in Prop.\ref{bound} has well-known estimate:
\begin{equation}
\omega(q)\sim \log\log q.
\end{equation}

For sufficiently large $q$,
$$
\left(ij(i+j)\sqrt{(k,\ell)}\right)^{\omega(q)}\leq \left(ij(i+j)^{\frac{3}{2}}\sqrt{(k,\ell)}\right)^{c \log \log q}\leq (\log q)^{c\log(i+j)^{\frac{3}{2}}ij\sqrt{(k,\ell)}}.
$$
Thus, we have that  for any $\epsilon>0$,
$$
\left(ij(i+j)^{\frac{3}{2}}\sqrt{(k,\ell)}\right)^{\omega(q)}<<q^{\epsilon}.
$$

Therefore, we have the following Weil type bound:
\begin{thm}
For given pair of positive integers $i,j$,
$$
\left|K_{ij}(k,\ell,q)\right|<< q^{\frac{1}{2}+\epsilon}, \,\,\, \forall \epsilon>0.
$$
\end{thm}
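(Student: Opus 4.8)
The plan is to deduce the bound directly from Proposition~\ref{bound}, whose only $q$-dependent ingredients are the factor $\sqrt q$ and the factor $C^{\omega(q)}$, where I abbreviate $C := ij(i+j)^{3/2}\sqrt{(k,\ell)}$. Since $i,j,k,\ell$ are fixed, $C$ is a fixed constant $>1$, and the whole game reduces to showing that the slowly growing factor $C^{\omega(q)}$ is absorbed into an arbitrarily small power of $q$; that is, $C^{\omega(q)} \ll_{\epsilon} q^{\epsilon}$ for every $\epsilon>0$, equivalently $C^{\omega(q)} = q^{o(1)}$.

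First I would record Proposition~\ref{bound} in the form $|K_{ij}(k,\ell,q)| \le C^{\omega(q)}\sqrt q$. Taking logarithms, the claim $C^{\omega(q)} = q^{o(1)}$ is the statement $\omega(q)\log C = o(\log q)$. This follows from the classical bound on the maximal order of the prime-factor-counting function, $\omega(q) \le (1+o(1))\,\dfrac{\log q}{\log\log q}$ as $q\to\infty$, whence $\omega(q)\log C = O\!\left(\dfrac{\log q}{\log\log q}\right) = o(\log q)$, and exponentiating gives $C^{\omega(q)} = \exp\!\left(o(\log q)\right) = q^{o(1)}$. Alternatively one can bypass the maximal-order estimate: $C^{\omega(q)}$ is a multiplicative function equal to $C$ at every prime power, and comparing it prime-by-prime with $q^{\epsilon} = \prod_{p^{a}\| q} p^{a\epsilon}$—using $C \le p^{\epsilon}$ for all but the finitely many primes $p \le C^{1/\epsilon}$—yields $C^{\omega(q)} \ll_{\epsilon} q^{\epsilon}$ directly.

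Combining the two pieces gives $|K_{ij}(k,\ell,q)| \le C^{\omega(q)}\sqrt q = q^{1/2+o(1)}$, so that for every $\epsilon>0$ we have $|K_{ij}(k,\ell,q)| \ll q^{1/2+\epsilon}$, which is the assertion.

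The one point that deserves care—and where a naive argument can go wrong—is that $C^{\omega(q)}$ is genuinely unbounded, since $\omega$ is unbounded, and so cannot be swallowed by a constant. The correct uniform input is the maximal order $\log q/\log\log q$ of $\omega$, not its normal or average order $\log\log q$: the latter controls only typical $q$, whereas highly composite moduli can push $\omega(q)$ up to order $\log q/\log\log q$. Even at this extreme, however, $C^{\omega(q)}$ remains of size $q^{o(1)}$, so the division by $q^{\epsilon}$ still wins; this is the only substantive step, the remainder being bookkeeping.
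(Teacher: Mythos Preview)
Your argument is correct and follows the same skeleton as the paper: invoke Proposition~\ref{bound} to get $|K_{ij}(k,\ell,q)|\le C^{\omega(q)}\sqrt{q}$ with $C=ij(i+j)^{3/2}\sqrt{(k,\ell)}$, then show $C^{\omega(q)}\ll_\epsilon q^\epsilon$. The one substantive difference is in how the $\omega(q)$ factor is controlled. The paper writes $\omega(q)\sim\log\log q$ and bounds $C^{\omega(q)}$ by $(\log q)^{c\log C}$; you instead use the maximal-order estimate $\omega(q)\le(1+o(1))\log q/\log\log q$, and you explicitly flag that the normal order $\log\log q$ is not adequate for a pointwise bound. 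You are right about this: the paper's intermediate inequality $C^{\omega(q)}\le C^{c\log\log q}$ fails for primorial $q$, so your version is the rigorous one, even though the final conclusion $C^{\omega(q)}=q^{o(1)}$ is the same either way. Your alternative prime-by-prime multiplicative argument is also a clean way to reach the same end without appealing to any asymptotic for $\omega$.
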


Now, 
we show the Weyl's criterion of Generalized dedekind sum from the
following estimate:
\begin{equation}
\begin{split}
\sum_{0<q<x}&\sum_{\substack{0<p<q\\(p,q)=1}} \bbe\left(m R\left(i,j\right) q^{N-2} s_{i,j}\left(p,q\right)\right)\\
&= \sum_{0<q<x}K_{i,j}\left(m\alpha_{N}r_N\begin{pmatrix}N-1 \\i\end{pmatrix},
m\alpha_{N}r_N \begin{pmatrix}N-1 \\j\end{pmatrix},q\right)\leq x^{\frac{3}{2}+\epsilon}. 
\end{split}
\end{equation}

Consequently,  
Weyl's equidistribution criterion is fulfilled for
the fractional part of 
$R(i,j) q^{N-2} s_{i,j}(p,q)$:
\begin{equation}
\begin{split}
&E_{ij}(m,x)= \\
& \frac{1}{\# \left\{(p,q)|\gcd(p,q)=1, p<q\le x \right\} } \sum_{0<q<x}\sum_{\substack{0<p<q\\(p,q)=1}} \bbe\left(m R\left(i,j\right) q^{N-2} s_{i,j}\left(p,q\right)\right)\rightarrow0,
\end{split}
\end{equation}
as $x\rightarrow \infty$.
Therefore the proof of the main theorem is finished.

\end{document}